\def\BState{\State\hskip-\ALG@thistlm}
\setlist[itemize]{leftmargin=*}
  \pgfplotsset{compat=newest}
\newtheorem{thm}{Theorem}[section]
\newtheorem{cor}{Corollary}[section]
\newcommand{\vx}{\mathbf{x}}
\newcommand{\vy}{\mathbf{y}}
\newcommand{\vv}{\mathbf{v}}
\newcommand{\vw}{\mathbf{w}}
\newcommand{\vf}{\mathbf{f}}
\newcommand{\vvarphi}{\boldsymbol{\varphi}}
\newcommand{\vnu}{\boldsymbol{\nu}}
\newcommand{\avrg}[1]{{\left\{\kern-0.5ex\left\{ #1 \right\}\kern-0.5ex\right\}}} 
\newcommand{\Th}{\mathcal{T}_h} 
\newcommand{\Eh}{\mathcal{E}_h} 
\newcommand{\Nh}{\mathcal{N}_h} 
\newcommand{\V}{\mathbb{V}} 
\newcommand{\tol}{ {\rm tol}} 
\newcommand{\bz}{\boldsymbol{0}}
\newcommand{\g}{g} 
\newcommand{\tr}{{\rm tr}} 
\newcommand{\A}{\mathcal{A}} 
\newcommand{\I}{\normalfont{\Romanbar{1}}}
\crefname{hypothesis}{Hypothesis}{Hypotheses}
\title{Accelerated gradient flows for large bending deformations of nonlinear plates}
\author{Guozhi Dong\thanks{School of Mathematics and Statistics, Hunan Research Center of the Basic
Discipline for Analytical Mathematics, HNP-LAMA, Central South University, Changsha 410083, China 
  (\email{guozhi.dong@csu.edu.cn}).}
\and Hailong Guo\thanks{School of Mathematics and Statistics, The University of Melbourne, Parkville, VIC, 3010, Australia 
  (\email{hailong.guo@unimelb.edu.au}).}
\and Shuo Yang\thanks{Beijing Institute of Mathematical Sciences and Applications, Beijing, 101408, China
  (\email{shuoyang@bimsa.cn}).}}
\begin{document}
\maketitle

\begin{abstract}
In this paper, we propose novel algorithms integrated projection-free techniques with accelerated gradient flows to minimize bending energies for nonlinear plates with non-convex metric constraints. We discuss the stability and constraint consistency in a semi-discrete setting for both bilayer and prestrained plates.  The proposed algorithms exhibit substantial improvements in efficiency and accuracy compared to the current state-of-the-art methods based on gradient flows.
\end{abstract}

\begin{keywords}
Accelerated gradient flows, projection-free, bilayer plates, prestrained plates, non-convex constrained minimization, energy stability. 
\end{keywords}

\begin{MSCcodes}
65N12, 65K10, 74K20, 35Q90
\end{MSCcodes}

\section{Introduction}
Large spontaneous deformations of nonlinear plates have attracted significant attention in materials engineering and applied mathematics. These slender structures respond to environmental stimuli, such as temperature or light, enabling diverse applications across natural phenomena and engineered devices at various scales. Examples include the snapping behavior of the Venus flytrap \cite{forterre2005venus}, the natural growth of plant leaves and blossoms \cite{goriely2005differential}, cell encapsulation devices \cite{stoychev2012shape}, and self-deploying solar sails \cite{love2007demonstration}.

Bilayer and prestrained plates are models capable of describing deformation mechanisms for a broad class of structures. Bilayer plates consist of two materials glued together, which react differently to environmental changes, allowing them to attain non-trivial shapes without external forces. Prestrained plates, on the other hand, simulate materials with residual stress (natural or manufactured) in their ``rest'' state, leading to large spontaneous deformations. Both models are derived from 3D nonlinear hyperelasticity theory and involve minimizing energies under non-convex metric constraints. For a comprehensive discussion of these models and their applications, we refer to \cite{bartels2017bilayer}, \cite{bonito2022ldg}, and references therein.

This work focuses on developing novel computational methods for these challenging non-convex constrained variational problems. Throughout the paper, material deformations are denoted by $\vy: \Omega \to \mathbb{R}^3$, where $\Omega \subset \mathbb{R}^2$ is a bounded Lipschitz domain. Equilibrium deformations are characterized as minimizers of bending energies subject to metric constraints on $\vy$.

\subsection{Bilayer plates}
The model of bilayer plates we consider here was developed and analyzed in \cite{schmidt2007minimal, schmidt2007plate,bartels2017bilayer}. 
In this model, the equilibrium deformations are characterized by minimizers of the following constrained optimization problem 
\begin{equation}\label{bilayer-energy-3}
\min\limits_{\mathbf{y}\in\A}E_{bi}\left[\mathbf{y}\right]:= \min\limits_{\mathbf{y}\in\A}\frac{1}{2}\int_{\Omega}\big|D^2\vy\big|^2-\sum_{i,j=1}^2\int_{\Omega}\partial_{ij}\vy\cdot(\partial_1\vy\times\partial_2\vy)Z_{ij},
\end{equation}
where $Z\in[L^{\infty}(\Omega)]^{2\times2}$ represents the \emph{spontaneous curvature} that reflects the materials characteristics of the bilayer plates. Essentially, the presence of $Z$ drives the plate $\vy(\Omega)$ to deform out of plane and achieve non-trivial shapes. 
We further define the non-convex term of $E_{bi}$ as $E_{bi}^{nc}:=-\sum_{i,j=1}^2\int_{\Omega}\partial_{ij}\vy\cdot(\partial_1\vy\times\partial_2\vy)Z_{ij}$, and convex terms as $E^c_{bi}=E_{bi}-E^{nc}_{bi}$.

The {\it admissible set} $\A$, which prevents shearing and stretching within the surface $\vy(\Omega)$ and imposes possible boundary conditions, is defined as follows: 
\begin{equation}\label{iso-admissible}
\A:=\big\{\mathbf{y}\in[H^2(\Omega)]^3:\quad\I[\vy]=I_2\ \text{ in }\Omega,\quad \mathbf{y}=\vvarphi, \ \nabla\mathbf{y}=\Phi\text{ on }\Gamma^D\big\}.
\end{equation}
Here $\I[\vy]$ is the first fundamental form of $\vy(\Omega)$, i.e. 
\begin{equation}\label{first-fund-form}
\I[\vy]:=\nabla\mathbf{y}^T\nabla\mathbf{y},
\end{equation}
and $I_2$ is the $2\times2$ identity matrix, $\Gamma^D\subset\partial\Omega$ stands for the part of boundary where clamped boundary conditions are imposed.
We assume that the boundary data $\vvarphi\in [H^2(\Omega)]^3$ and $\Phi \in [H^1(\Omega)]^{3\times2}$ are given and compatible with the isometry constraint, namely $\Phi=\nabla\vvarphi$ and $\Phi^T\Phi=I_2$ on $\Gamma_D$; thus, $\A$ is non-empty. It is worth noting that both the bending energy $E_{bi}[\vy]$ and the constraint $\I[\vy]=I_2$ are \emph{non-convex} in this model. 

\subsection{Prestrained plates}
For prestrained plates, we define a target metric $g\in[H^1(\Omega)\cap L^{\infty}(\Omega)]^{2\times2}$, a given symmetric positive definite matrix-valued function, and the deformations belong to the following admissible set
\begin{equation}\label{pre-admissible}
\A_g:=\big\{\mathbf{y}\in[H^2(\Omega)]^3:\quad\I[\vy]=g\ \text{ in }\Omega,\quad \mathbf{y}=\vvarphi, \ \nabla\mathbf{y}=\Phi\text{ on }\Gamma^D\big\}.
\end{equation}
Moreover, we assume that $g$ admits \emph{isometric immersions}, i.e., there exists $\vy\in[H^2(\Omega)]^3$ such that $\nabla\vy^T\nabla\vy=g$ a.e., and again the boundary data $\vvarphi\in [H^2(\Omega)]^3$ and $\Phi \in [H^1(\Omega)]^{3\times2}$ are compatible with $g$, namely $\Phi=\nabla\vvarphi$ and $\Phi^T\Phi=g$ on $\Gamma_D$. These assumptions guarantee that the admissible set $\A_g$ is nonempty.   

We consider the bending energy model for isotropic prestrained plates that was proposed in \cite{efrati2009elastic} and later rigorously analyzed in \cite{bhattacharya2016plates}. We refer to \cite{bonito2022ldg} for a formal derivation. 
The equilibrium deformation is a solution of 
\begin{equation}\label{prob:min_Eg}
\min\limits_{\mathbf{y}\in\A_g}E_{pre}\left[\mathbf{y}\right]:= \min\limits_{\mathbf{y}\in\A_g} \frac{\mu}{12}\int_{\Omega}\left|\g^{-\frac{1}{2}}D^2\vy\g^{-\frac{1}{2}}\right|^2+\frac{\lambda}{2\mu+\lambda}\tr\left(\g^{-\frac{1}{2}}D^2\vy \g^{-\frac{1}{2}}\right)^2,
\end{equation}
where $\lambda$ and $\mu$ are Lam\'e parameters of the material. 
For the sake of concise presentation, although $D^2\vy$ is a $3\times2\times2$ tensor valued, we here define $\g^{-\frac{1}{2}}D^2\vy\g^{-\frac{1}{2}}:=\big(\g^{-\frac{1}{2}}D^2y_m\g^{-\frac{1}{2}}\big)_{m=1}^3$ with $\vy := (y_m)_{m=1}^3$, and this is also a $3\times2\times2$ tensor.
Note that in \eqref{prob:min_Eg} the energy $E_{pre}$ is quadratic (thus convex), while the constraint is still \emph{non-convex}. 

\subsection{Numerical methods for nonlinear plates}\label{sec:prev-works}
Numerically solving the constrained problems \eqref{bilayer-energy-3} and \eqref{prob:min_Eg} presents a nontrivial challenge, primarily  due to their inherent non-convexity. 

From the aspect of optimization, most existing works for nonlinear plates, such as \cite{bartels2013approximation,bonito2019dg,bonito2023numerical}, resort to gradient flow type iterative schemes along with tangent space update strategy to linearize the constraint $\I[\vy]=g$ in each step.
The semi-discrete formulation formally reads as  
\begin{equation}\label{eq:gf-semi-dis}
\tau^{-1}\delta\vy^{n+1}+\delta E[\vy^{n}+\delta\vy^{n+1}]=\bz,
\end{equation}
where $\vy^n$ is a given previous step of iteration, $\tau>0$ is a time step parameter,  and $\delta E$ represents the first variation of energy functional. In these schemes, $\delta\vy^{n+1}:=\vy^{n+1}-\vy^n\in \mathcal{F}(\vy^n)$ is solved within tangent space of the constraint at previous iterate: 
\begin{equation}\label{def:tangent-space}
\mathcal{F}(\vy^n):=\big\{\vv:L[\vy^n;\vv] := \nabla \vv^T \nabla \vy^n + (\nabla \vy^n)^T \nabla \vv = \bz\big\}.
\end{equation}
This method naturally compares to the projection-free gradient flows in the study of harmonic maps and liquid crystals; see \cite{bartels2016projection,nochetto2022gamma}. 

For bilayer plates, where the energy is non-convex, the above gradient flow step \eqref{eq:gf-semi-dis} is nonlinear. In \cite{bartels2017bilayer}, a fixed point sub-iteration is employed to solve the nonlinear equation \eqref{eq:gf-semi-dis}. In contrast, \cite{bartels2020stable,bonito2023gamma} treat the non-convex part of the energy explicitly and keep the convex part implicit, which solves linear problems iteratively in the end.  

Although the constraint is linearized and thus violated in iterations, the violation is proven to be proportional to the time step $\tau$ in the aforementioned works. Moreover, monotone decay of the energy is guaranteed in these gradient flow methods, either unconditionally or with mild restrictions on $\tau$. 

From the aspects of spatial discretization, several finite element methods (FEM), including Kirchhoff FEM, interior penalty discontinuous Galerkin (IPDG) method, Specht FEM, and local discontinuous Galerkin (LDG),  have been designed for these nonlinear plates models and analyzed in the framework of $\Gamma$-convergence; see \cite{bartels2013approximation,bonito2019dg,li10specht,bartels2017bilayer,bartels2018modeling,bartels2020stable,bonito2020discontinuous,bonito2023gamma,bonito2022ldg,bonito2023numerical,bonito2023numerical-2}. 

Recently, a deep learning based method was designed in \cite{li2023pre} for bilayer plates \eqref{bilayer-energy-3}, and it utilizes a stochastic gradient descent method and deals with the constraint by adding a penalty term to the energy. Moreover, the algorithm in \cite{li10specht} employs the combination of an adaptive time-stepping gradient flow and a Newton’s method. Additionally, \cite{rumpf2022finite} incorporates the nonlinear metric constraint by Lagrange multipliers and solves the nonlinear problem via a Newton's method for a shell model similar to prestrained plates.   

\subsection{Accelerated gradient flows}
Although gradient flow schemes are popular in existing works due to their ease of use and computational stability, they converge slowly for difficult problems like bilayer plates, where the non-convexity brings a more significant challenge, and prestrained plates, when the anisotropy in the metric tensor becomes significant. On the other hand, gradient-flow-based algorithms are easily stuck at stationary points for non-convex problems. With an attempt to overcome these limitations, we aim to design efficient and robust algorithms based on accelerated gradient flows for \eqref{bilayer-energy-3} and \eqref{prob:min_Eg} in this work.    
       
 The study of accelerated gradient flows can be traced back to the seminal Polyak's momentum method \cite{Pol64,AttGouRed2000} and also Nesterov's accelerated gradient descent method \cite{Nesterov1983,Nesterov2004} for convex optimization problems. More recently, an ordinary differential equation interpretation of the Nesterov's method in finite dimensional space was eastablished in \cite{SuBoyCan2016JMLR}, which inspired more advanced theoretical analysis and generalization; see, for instance, the works in \cite{AttChbPeyRed2018,AttBotCse2023JEMS,BotDonElbSch2022FCM,CheDonIglLiuXie25,DonHinZha2021SIIMS} and the references therein. 

The established theory of accelerated flows in convex optimization cannot be applied directly to non-convex variational models. Research on these accelerated flows for variational problems in scientific computing with practical applications, which often involving non-convex models, is still in its early stages both theoretically and computationally. For example, Nesterov-type acceleration has been utilized in an unconstrained model of prestrained plates \cite{bonito2023numerical-2,bonito2023finite}, and similar techniques are explored for a broad class of unconstrained problems \cite{calder2019pde,CheDonIglLiuXie25}. In the context of constrained models, novel accelerated flow algorithms show promising efficiency in computing ground states for Bose-Einstein condensates \cite{CheDonLiuXie23} and the Landau-de Gennes model for liquid crystals \cite{AAMM-14-1} without numerical analysis. Notably, these methods deal with different constraints using distinct approaches compared with our work.  
  
In this work, we propose novel algorithms for constrained minimization problems \eqref{bilayer-energy-3} and \eqref{prob:min_Eg} by integrating the acceleration technique with the tangent space update strategy \eqref{def:tangent-space} that handles metric constraints. 
The accelerated flows entail modifying gradient flows \eqref{eq:gf-semi-dis} to be formally as 
\begin{equation}\label{eq:second-semi-dis}
\tau^{-2}(\delta\vy^{n+1}+\vw^n-\vy^n)+\delta E[\delta\vy^{n+1}+\vw^n]=\bz,
\end{equation}
where $\vw^{n}:=\vy^{n}+\eta^{n}\delta\vy^{n}$ is an auxiliary variable, and $\eta^n$ is a crucial coefficient that influences the acceleration effect and constraint violation. The choice $\eta^n=\frac{n-1}{n+\alpha-1}$ with $\alpha\ge3$ is motivated by a generalized Nesterov's acceleration strategy, and $\eta^n=1-\beta\tau$ with $0<\beta<1/\tau$ is inspired by the heavy ball method. 
Moreover, we emphasize that in \eqref{eq:second-semi-dis}, $\delta\vy^{n+1}\in\mathcal{F}(\vy^n)$ is computed with the linearized constraint $L[\vy^n;\delta\vy^{n+1}]=\bz$. In particular, for bilayer plates, we treat the convex part $E^{c}_{bi}$ of energy implicitly and the non-convex part $E^{nc}_{bi}$ explicitly.   

\subsection{Contributions and outline of the paper}\label{sec:outline}
In Section \ref{sec:acc-flow}, we introduce a  {semi-implicit} accelerated gradient flow scheme incorporating a tangent space update strategy for prestrained plates. We establish a formal relationship between the accelerated flows and a dissipative hyperbolic partial differential equation involving the second-order time derivative of $\vy$, as in \eqref{eq:ODE}. 
This relationship motivates our result for the monotonic decrease in total energy (the sum of $E_{pre}$ and a kinetic term). Unlike gradient flows, the energy $E_{pre}$ oscillates in accelerated flows, while the kinetic energy converges to $0$ as $n \to \infty$. Furthermore, we demonstrate that constraint violations are controlled by $\mathcal{O}(\tau)$.

Section \ref{sec:backtrack} extends our accelerated flows by incorporating a backtracking strategy to ensure the monotone decrease of $E_{pre}$. In Section \ref{sec:BDF2}, we introduce a second-order backward differential formula (BDF2) approximation to improve the precision of constraint preservation, inspired by recent work \cite{akrivis2023quadratic} that employed the BDF2 scheme in gradient flows with tangent space updates and appropriate extrapolation. 

 {In Section~\ref{sec:bilayer}, we propose implicit-explicit accelerated gradient flows for bilayer plates, employing an implicit-explicit splitting to handle the additional non-convex energy term.}
We reiterate the results on total energy stability and constraint violation control, of which detailed proofs are provided in Appendix \ref{sec:appendix}.
 {The backtracking strategy and BDF2 temporal discretization are also applicable to the implicit-explicit accelerated gradient flow method.}

 {Section~\ref{sec:alg} presents pseudo-code implementations of all proposed algorithms. Notably, }
the discussions in this paper primarily focus on temporal semi-discrete formulations but are applicable to general spatial discretizations with appropriate consistency. 
We specifically employ Morley FEM for spatial discretization in our computations, as introduced in Section \ref{sec:alg} as well. 

In Section \ref{sec:numerics}, we evaluate the proposed algorithms through several benchmark examples. Section \ref{sec:numerics-properties} confirms our theoretical findings, including total energy stability and control of constraint violations. In Section \ref{sec:varying-alpha}, we discuss the selection principles for key parameters $\alpha$ and $\beta$ in our algorithms. Section \ref{sec:comparison} illustrates that our proposed algorithms outperform existing gradient flow schemes, particularly for challenging problems such as bilayer plates or prestrained plates with a large ratio of eigenvalues in the metric $g$. We find that accelerated flows with backtracking and BDF2 schemes are the most efficient and accurate among all discussed algorithms. Finally, in Section \ref{sec:local-global}, we demonstrate the capability of the new method to reach a global minimizer for a challenging bilayer plate example, where previous gradient-flow methods were known to be stuck at a stationary point.

In Section \ref{sec:conclusions}, we summarize our contributions and outline potential future directions related to the current work. 

\section{ {Semi-implicit accelerated gradient flows}}
\label{sec:acc-flow}
 {
Motivated by \eqref{eq:second-semi-dis}, we develop semi-implicit semi-discrete formulations of accelerated flows for prestrained plates, specifically targeting the constrained minimization problem \eqref{prob:min_Eg} and analyzing its theoretical properties including energy stability and constraint violation control.  
}

We fix the notations $\|\cdot\|_{L^p}$ ($1\le p\le\infty$), $\|\cdot\|_{H^2}$, $|\cdot|_{H^2}$, $(\cdot,\cdot)_{L^2}$, and $(\cdot,\cdot)_{H^2}$ to represent norms, seminorms, and inner products on Sobolev spaces such as $L^p(\Omega)$ and $H^2(\Omega)$, as well as their vector-valued and tensor-valued counterparts. We will slightly abuse the inner product notation by defining $(\vv,\vw)_{H^2}:=(D^2\vv,D^2\vw)_{L^2}$ whenever $\vv,\vw\in[H^2_0(\Omega)]^3$, since $\|\vv\|_{H^2}\approx|\vv|_{H^2}$ holds for any $\vv\in[H^2_0(\Omega)]^3$. 
We use $C(\cdot)$ to represent generic constants, which are determined by the quantities in the parentheses. In particular, the use of the same $C(\cdot)$ indicates their dependence on the same data, but does not necessarily mean they are identical. Moreover, we define bilinear forms relates to first variations of prestrained energy as $a_g(\vy,\vv):=\delta E_{pre}[\vy](\vv)$. For any $\vv\in[H^2(\Omega)]^3$,  
\begin{equation}\label{eq:coer-ag}
a_g(\vv,\vv)=2E_{pre}[\vv]\ge C(\lambda,\mu,g)|\vv|_{H^2}^2.
\end{equation} 

We utilize the tangent space update strategy as described in Section \ref{sec:prev-works}, which results in a violation of constraint that we aim to control.    
Therefore, we define the violation $D_{g,p}$ with the given metric $g$ and $1\le p\le\infty$ and corresponding relaxed admissible sets $\A_{g,p}^\epsilon$ as  
\begin{equation}\label{eq:Dgp}
D_{g,p}[\vy]:=\|\I[\vy]-g\|_{L^p},
\end{equation}
and
\begin{equation}\label{pre-admissible-relaxed}
\A_{g,p}^\epsilon:=\big\{\mathbf{y}\in[H^2(\Omega)]^3:\quad D_{g,p}[\vy]\le\epsilon,\quad \mathbf{y}=\vvarphi, \ \nabla\mathbf{y}=\Phi\text{ on }\Gamma^D\big\}.
\end{equation}
It is clear that in the limit of $\epsilon\to0$,  the metric constraints $\I[\vy]=g$ are satisfied pointwise almost everywhere. 

\subsection{ {BDF1 iterative scheme}}\label{sec:prestrained} 
Choosing a proper initial state $\vy^0\in\A_{g}$ satisfying the metric constraint and Dirichlet boundary conditions as in \eqref{pre-admissible} and setting $\vw^0=\vy^0$, at arbitrary $n$-th iteration ($n\ge0$) we compute increments $\delta\vy^{n+1}\in\mathcal{F}(\vy^n)$,  {where BDF1 scheme is employed for approximating the first-order temporal derivative}, so that 
\begin{align}\label{eq:nesterov_scheme}
\tau^{-2}(\delta\vy^{n+1},\vv)_{H^2} + a_g(\delta\vy^{n+1},\vv)&=-a_g(\vw^n,\vv)+\tau^{-2}(\vw^n-\vy^{n},\vv)_{H^2} \\ \label{eq:update_y}
\vy^{n+1}&=\vy^{n}+\delta\vy^{n+1} \\ \label{eq:update_w}
\vw^{n+1}&=\vy^{n+1}+\eta^{n+1}\delta\vy^{n+1}.
\end{align}
 {The damping coefficient} $\eta^n$ has two choices: a varying coefficient  
\begin{equation}\label{eq:eta-nesterov}
\eta^n:=\frac{n-1}{n+\alpha-1},
\end{equation}
with $\alpha\ge3$, or a constant coefficient  
\begin{equation}\label{eq:eta-heavy-ball}
\eta^n\equiv1-\beta\tau,
\end{equation}
with $0<\beta<1/\tau$.
 {Notably, our semi-implicit scheme treats the quadratic energy for prestrained plates implicitly, while handling the non-convex constraint explicitly through the tangent space update $\delta\vy^{n+1}\in\mathcal{F}(\vy^n)$. This approach ensures that a linear system is solved at each iteration.}

By canceling $\vw^n$ in \eqref{eq:nesterov_scheme} and using that $a_g$ is a bilinear form, we derive 
\begin{equation*}
\frac{1}{\tau^{2}}(\delta\vy^{n+1},\vv)_{H^2} + a_g(\delta\vy^{n+1},\vv)=-a_g(\vy^n,\vv)-\eta^{n}a_g(\delta\vy^n,\vv)+\frac{\eta^{n}}{\tau^{2}}(\delta\vy^n,\vv)_{H^2},
\end{equation*} 
and then minus $\frac{1}{\tau^{2}}(\delta\vy^{n},\vv)_{H^2}$ on both sides and rearrange the terms to get 
\begin{align}\label{eq:nesterov_scheme_2}
\frac{1}{\tau^{2}}(\delta\vy^{n+1}-\delta\vy^{n},\vv)_{H^2}&+\frac{1-\eta^{n}}{\tau^{2}}(\delta\vy^n,\vv)_{H^2}+a_g(\vy^n,\vv) \\ \nonumber
&+\eta^{n}a_g(\delta\vy^n,\vv)+a_g(\delta\vy^{n+1},\vv)=0.
\end{align} 
Considering a continuous flow $Y(t,\vx)\in C^2(0,\infty;[H^2(\Omega)]^3)$, we set $\vy^n:=Y(t_n,\cdot)$ with an arbitrary fixed $t_n>0$, $n:=t_n/\tau-\alpha+1$ ($\tau$ is properly chosen to guarantee $n$ is an integer) and $t_{n\pm1}:=t_n\pm\tau$.
Using the Taylor expansion of $Y(t_{n+1},\cdot)$ and $Y(t_{n-1},\cdot)$ around $t=t_n$ in \eqref{eq:nesterov_scheme_2}, noticing that $\frac{1-\eta^{n}}{\tau^{2}}=\frac{\alpha}{\tau t_n}$ when $\eta^n=\frac{n-1}{n+\alpha-1}$, and dropping $\mathcal{O}(\tau)$ terms for small enough $\tau$ yield the weak formulation of following PDE dynamics with artificial time $t=t_n$: 
\begin{equation}\label{eq:ODE}
\ddot{Y}(t,\cdot)+\frac{\alpha}{t}\dot{Y}(t,\cdot)+\delta E_{pre}[Y(t,\cdot)] = 0.
\end{equation} 
We note that in this process the second line of \eqref{eq:nesterov_scheme_2} vanishes, because $|\eta^n|\le1$ and 
$\delta\vy^n,\delta\vy^{n+1}=\mathcal{O}(\tau)$.
Separately, we can repeat the same formal asymptotic process in the constraint $\delta\vy^{n+1}\in\mathcal{F}(\vy^n)$, i.e., $(\nabla\delta\vy^{n+1})^T\nabla\vy^n+(\nabla\vy^n)^T\nabla\delta\vy^{n+1}=0$, and drop the $\mathcal{O}(\tau^2)$ terms to obtain $\nabla\dot{Y}^T\nabla Y+\nabla Y^T\nabla\dot{Y}=0$, namely $\frac{d(\nabla Y^T\nabla Y)}{dt}=0$, which shows consistency to the metric constraint given that $\nabla Y(0,\vx)^T\nabla Y(0,\vx)=g(\vx)$.
Moreover, when $\eta^n=1-\beta\tau$, the damping coefficient $\frac{\alpha}{t}$ in \eqref{eq:ODE} is then replaced by a constant $\beta$, which corresponds to the heavy-ball dynamics as discussed in \cite{AttGouRed2000}.  {The proposed method is summarized later in Section \ref{sec:alg} as Algorithm \ref{algo:acc_flow}}

\subsection{ {Energy stability and admissibility}}\label{sec:property-prestrained}
A direct calculation implies that the PDE \eqref{eq:ODE} guarantees the monotone decreasing of $\frac12\|\dot{Y}(t,\cdot)\|^2+E_{pre}[Y(t,\cdot)]$ along the flow $Y(t,\cdot)$. This motivates the following theorem.  

\begin{thm}[Total energy stability]\label{thm:energy-stab-second-pre}
Suppose $n\ge1$, $\vy^n\in[H^2(\Omega)]^3$ and $\delta\vy^n\in[H^2_0(\Omega)]^3$ are given, and $\delta\vy^{n+1}$ is solved by \eqref{eq:nesterov_scheme_2}. Define $\vy^{n+1}:=\vy^n+\delta\vy^{n+1}$. If $\tau\le C(\lambda,\mu,g)^{-1/2}$ with the constant $C(\lambda,\mu,g)$ appearing in \eqref{eq:coer-ag}, then the following energy stability estimate is valid: 
\begin{equation}\label{eq:energy-stab-pre-1}
E_{pre}[\vy^{n+1}]+\frac{1}{2\tau^{2}}|\delta\vy^{n+1}|_{H^2}^2+\frac{1-\eta^{n}}{2\tau^{2}}|\delta\vy^{n+1}|_{H^2}^2\le E_{pre}[\vy^{n}]+\frac{1}{2\tau^{2}}|\delta\vy^{n}|_{H^2}^2.
\end{equation}
\end{thm}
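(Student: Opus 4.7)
The plan is to test the semi-discrete scheme \eqref{eq:nesterov_scheme_2} with $\vv = \delta\vy^{n+1}$ and then rewrite every resulting term using three algebraic tools: the polarization identity for the $(\cdot,\cdot)_{H^2}$ inner product, the same polarization identity applied to the symmetric bilinear form $a_g$ itself, and the quadratic expansion of $E_{pre}$ around $\vy^n$. First, $H^2$-polarization converts $(\delta\vy^{n+1}-\delta\vy^n,\delta\vy^{n+1})_{H^2}$ and $(\delta\vy^n,\delta\vy^{n+1})_{H^2}$ into squared seminorms, producing the desired kinetic-energy telescope $\frac{1}{2\tau^2}|\delta\vy^{n+1}|_{H^2}^2 - \frac{1}{2\tau^2}|\delta\vy^n|_{H^2}^2$ together with terms in $|\delta\vy^{n+1}-\delta\vy^n|_{H^2}^2$. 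Second, since $E_{pre}$ is quadratic one has $a_g(\vy^n,\delta\vy^{n+1}) = E_{pre}[\vy^{n+1}] - E_{pre}[\vy^n] - E_{pre}[\delta\vy^{n+1}]$, which inserts the targeted potential-energy telescope. Third, the bilinear polarization $a_g(\delta\vy^n,\delta\vy^{n+1}) = E_{pre}[\delta\vy^n] + E_{pre}[\delta\vy^{n+1}] - E_{pre}[\delta\vy^{n+1}-\delta\vy^n]$ splits the sign-indefinite cross-term that carries the Nesterov/heavy-ball coefficient $\eta^n$, and $a_g(\delta\vy^{n+1},\delta\vy^{n+1}) = 2 E_{pre}[\delta\vy^{n+1}]$ completes the catalogue.

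After substituting all five identities and regrouping, the claimed inequality reduces to verifying a single pointwise-in-$n$ estimate of the form
\begin{equation*}
(1+\eta^n)E_{pre}[\delta\vy^{n+1}] + \eta^n E_{pre}[\delta\vy^n] + \frac{\eta^n}{2\tau^2}|\delta\vy^{n+1}-\delta\vy^n|_{H^2}^2 - \eta^n E_{pre}[\delta\vy^{n+1}-\delta\vy^n] \ge 0,
\end{equation*}
combined with the trivial bound $\frac{\eta^n}{2\tau^2}|\delta\vy^n|_{H^2}^2 \le \frac{1}{2\tau^2}|\delta\vy^n|_{H^2}^2$, which is valid since $\eta^n \le 1$ for both the Nesterov choice $\eta^n=(n-1)/(n+\alpha-1)$ and the heavy-ball choice $\eta^n = 1-\beta\tau$.

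The hard part, and the reason the time-step hypothesis $\tau \le C(\lambda,\mu,g)^{-1/2}$ is invoked, is controlling the indefinite term $-\eta^n E_{pre}[\delta\vy^{n+1}-\delta\vy^n]$. I would handle it via the continuity bound $E_{pre}[\vv] \le \tfrac{1}{2}C(\lambda,\mu,g)|\vv|_{H^2}^2$, where the continuity constant is controlled by the same Lam\'e and metric data as the coercivity constant in \eqref{eq:coer-ag} and is absorbed into the generic constant $C(\lambda,\mu,g)$. This majorizes the problematic term by $\frac{\eta^n C(\lambda,\mu,g)}{2}|\delta\vy^{n+1}-\delta\vy^n|_{H^2}^2$, which under $\tau^2 C(\lambda,\mu,g) \le 1$ is dominated by its companion $\frac{\eta^n}{2\tau^2}|\delta\vy^{n+1}-\delta\vy^n|_{H^2}^2$ generated in the previous step. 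The residual $(1+\eta^n)E_{pre}[\delta\vy^{n+1}] + \eta^n E_{pre}[\delta\vy^n]$ is automatically nonnegative, which closes the estimate and delivers \eqref{eq:energy-stab-pre-1}. The emerging picture matches the formal Lyapunov analysis of the continuous flow \eqref{eq:ODE}: the dissipation $-\frac{\alpha}{t}|\dot Y|^2$ in the continuous setting corresponds discretely to the extra term $-\frac{1-\eta^n}{2\tau^2}|\delta\vy^{n+1}|_{H^2}^2$ on the left-hand side of \eqref{eq:energy-stab-pre-1}.
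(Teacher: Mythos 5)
Your proposal is correct and follows essentially the same route as the paper: test \eqref{eq:nesterov_scheme_2} with $\vv=\delta\vy^{n+1}$, apply the polarization/telescoping identities for $(\cdot,\cdot)_{H^2}$ and for the quadratic $E_{pre}=\frac12 a_g(\cdot,\cdot)$, and absorb the indefinite term in $\delta\vy^{n+1}-\delta\vy^{n}$ using the boundedness of $a_g$ together with $\tau^2 C(\lambda,\mu,g)\le1$ and $0\le\eta^n<1$. Your explicit use of the continuity bound $E_{pre}[\vv]\le\frac12 C(\lambda,\mu,g)|\vv|_{H^2}^2$ is in fact slightly more precise than the paper's wording, which cites the coercivity estimate \eqref{eq:coer-ag} for this step while really relying on the generic-constant convention.
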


\begin{proof}
Taking $\vv=\delta\vy^{n+1}$ in \eqref{eq:nesterov_scheme_2}, exploiting the bilinearity of $a_g$ and $(\cdot,\cdot)_H^2$, and employing the algebraic identity $a(a-b)=\frac12a^2-\frac12b^2+\frac12(a-b)^2$ and $b(a-b)=\frac12a^2-\frac12b^2-\frac12(a-b)^2$, 
we derive 
{\small
\begin{align}\label{eq:energy-stab-pre-inter}
0=&\frac{1}{2\tau^{2}}\Big(|\delta\vy^{n+1}|_{H^2}^2-|\delta\vy^{n}|_{H^2}^2\Big)+E_{pre}[\vy^{n+1}]-E_{pre}[\vy^{n}] \\ \nonumber
&+\frac{1-\eta^{n}}{2\tau^{2}}\Big(|\delta\vy^n|_{H^2}^2+|\delta\vy^{n+1}|_{H^2}^2\Big)+\frac{1+\eta^n}{2}a_g(\delta\vy^{n+1},\delta\vy^{n+1})+\frac{\eta^n}{2}a_g(\delta\vy^n,\delta\vy^{n})\\ \nonumber
&+\frac{\eta^{n}}{2\tau^{2}}|\delta\vy^{n+1}-\delta\vy^{n}|_{H^2}^2-\frac{\eta^n}{2}a_g(\delta\vy^{n+1}-\delta\vy^n,\delta\vy^{n+1}-\delta\vy^n).
\end{align}  
}
Note that the first line gives the desired discrete total energy at $\vy^{n+1}$ and $\vy^{n}$, and each term in the second line is non-negative as $0\le\eta^n<1$ ($n\ge1$). The third line is also non-negative after incorporating the coercivity estimate \eqref{eq:coer-ag} and the assumption $\tau<C(\lambda,\mu,g)^{-1/2}$. This validates the energy stability \eqref{eq:energy-stab-pre-1}.
\end{proof}

Estimate \eqref{eq:energy-stab-pre-1} implies that the energy stability of the total energy $E_{pre}[\vy^{n}]+\frac{1}{2\tau^{2}}|\delta\vy^{n}|_{H^2}^2$, where the second term approximates $\frac12\|\dot{Y}\|_{H^2}^2$ and can be viewed as a kinetic energy. The term $\frac{1-\eta^{n}}{2\tau^{2}}|\delta\vy^{n+1}|_{H^2}^2$ in \eqref{eq:energy-stab-pre-1} is an energy dissipation term, which has other options, according to \eqref{eq:energy-stab-pre-inter}.  We emphasize that the constant in the condition $\tau< C(\lambda,\mu,g)^{-1/2}$ is not affected by any discretization parameter and depends solely on the problem data. This condition is quite mild when the SPD matrix-valued $g$ is bounded away from singular.    

Moreover, as $\vw^0=\vy^0$, taking $n=0$ in \eqref{eq:nesterov_scheme} gives us a step of gradient flow with time-step $\tau^2$, and therefore we have the energy stability estimate \cite{bonito2022ldg}
\begin{equation}\label{eq:energy-stability-step1-gf}
E_{pre}[\vy^{1}]+\frac{1}{\tau^{2}}|\delta\vy^{1}|_{H^2}^2\le E_{pre}[\vy^{0}].
\end{equation} 
Consequently, summing \eqref{eq:energy-stab-pre-1} over $n=1$ to $N$ for $N\ge1$ yields 
\begin{equation}\label{eq:energy-stability-sum-final}
E_{pre}[\vy^{N+1}]+\frac{1}{2\tau^{2}}|\delta\vy^{N+1}|_{H^2}^2+\sum_{n=1}^N\frac{1-\eta^n}{2\tau^{2}}|\delta\vy^{n+1}|_{H^2}^2\le E_{pre}[\vy^{0}].
\end{equation}
Note that the right-hand side  (RHS) of \eqref{eq:energy-stability-sum-final} can still be considered as a total energy if one imposes zero velocity initial condition $\dot{Y}(0,\cdot)=0$ in \eqref{eq:ODE}. 

Due to the monotone decay of total energy, as stated in Theorem \ref{thm:energy-stab-second-pre}, we stop the iterations when the total energy stabilizes; see Algorithm \ref{algo:acc_flow}.   

Based on the energy stability result, the limiting behavior of the velocity term can be immediately derived, which is stated below. 
\begin{cor}\label{cor:kinetic-lim}
Let $\{\vy^{n}\}_{n\in\mathbb{N}}$ be the sequence produced by \eqref{eq:nesterov_scheme}-\eqref{eq:update_w} with a fixed $\tau>0$, then the increments $\delta\vy^{n+1}:=\vy^{n+1}-\vy^n$ satisfy that $\lim_{n\to\infty}|\delta\vy^{n}|_{H^2}=0$.
\end{cor}
\begin{proof}
Summing \eqref{eq:energy-stab-pre-inter} over $n$ and conducting telescopic cancellation, we derive that $\sum_{n=1}^{\infty}(\eta^n/2)a_g(\delta\vy^{n},\delta\vy^{n})$ is bounded. We further note that $0\le\eta^n<1$ and recall the coercivity \eqref{eq:coer-ag} to conclude that $\lim_{n\to\infty}|\delta\vy^{n}|_{H^2}=0$.  
\end{proof}

Moreover, following the framework in \cite[Corollary 3.4]{dong2024bdf}, under realistic assumptions, we can show that a subsequence of $\vy^{n}$ converges to a limit $\vy^*$, which is a minimizer of $E_{pre}$ restricting to the tangent plane $\mathcal{F}(\vy^*)$. 

The metric constraint $\I[\vy]=g$ is not imposed exactly in the flow because we only require $\delta\vy^{n+1}\in\mathcal{F}(\vy^n)$. Next, we provide an estimate on the control of the violation of metric constraint to justify the admissibility of $\vy^{N+1}$ in $\mathcal{A}^{\epsilon}_{g,1}$ for any $N\ge0$.
 
\begin{thm}[Admissibility]\label{thm:admissibility-pre}
Let $\vy^0\in\mathcal{A}_{g}$ and $\{\vy^{n}\}_{n\in\mathbb{N}}$ be the sequence produced by \eqref{eq:nesterov_scheme}-\eqref{eq:update_w} with a fixed $\tau\le  C(\lambda,\mu,g)^{-1/2}$, where $ C(\lambda,\mu,g)$ is the constant appearing in \eqref{eq:coer-ag}. For any $N\ge1$, $\vy^{N+1}\in\mathcal{A}^{\epsilon}_{g,1}$ with $\epsilon=C(\Omega,\alpha)N\tau^2E_{pre}[\vy^{0}]$ when $\eta^n$ is chosen as \eqref{eq:eta-nesterov} and $\epsilon=C(\Omega,\beta)\tau E_{pre}[\mathbf{y}^{0}]$ when $\eta^n$ is chosen as \eqref{eq:eta-heavy-ball}.
\end{thm}
\begin{proof}
Using the tangent plane constraint $\delta\vy_h^{n+1}\in\mathcal{F}(\vy_h^n)$, a Poincar\'e-type inequality for $\delta\vy_h^{n+1}\in[H^2_0(\Omega)]^3$ and telescopic cancellation,   we derive 
\begin{equation}\label{eq:cons-vio-ortho}
\|\I[\vy^{N+1}]-g\|_{L^1(\Omega)}\le\|\I[\vy^0]-g\|_{L^1(\Omega)} +C(\Omega)\sum_{n=0}^N|\delta\vy_h^{n+1}|_{H^2}^2.
\end{equation}   
When $\eta^n$ is chosen as \eqref{eq:eta-nesterov}, for $N\ge1$ we resort to \eqref{eq:energy-stability-sum-final} and compute
\begin{equation}\label{eq:estimate-inc}
\frac{\alpha}{2(N+\alpha-1)\tau^{2}}\sum_{n=1}^N|\delta\vy^{n+1}|_{H^2}^2<\sum_{n=1}^N\frac{\alpha}{2(n+\alpha-1)\tau^{2}}|\delta\vy^{n+1}|_{H^2}^2\le E_{pre}[\vy^{0}],
\end{equation}
and together with \eqref{eq:energy-stability-step1-gf} we conclude that for $N\ge1$
\begin{equation*}\label{eq:iso-violation}
\|\I[\vy^{N+1}]-g\|_{L^1(\Omega)}\le C(\Omega)(1+\frac{2(N+\alpha-1)}{\alpha})\tau^2E_{pre}[\vy^{0}]\le C(\Omega,\alpha)N\tau^2E_{pre}[\vy^{0}].
\end{equation*}   
When $\eta^n$ is chosen as \eqref{eq:eta-heavy-ball}, it is straightforward to obtain the desired estimate upon applying an estimate similar to \eqref{eq:estimate-inc} with $\eta^n=1-\beta\tau$.
\end{proof}

\begin{remark} \label{rmk:control-violation}
At a first glance, the constraint violation given in Theorem \ref{thm:admissibility-pre} for the case when $\eta^n$ is given by \eqref{eq:eta-nesterov} may seem problematic, as the control parameter $\epsilon$ is not uniform and depends on iteration numbers. However, by employing the stopping criteria with respect to total energy, it is consistently observed that the total number of iterations $N=\mathcal{O}(\tau^{-1})$ in numerical simulations in Section \ref{sec:numerics} with fixed tolerance. Specifically, recalling $E_{total}[\mathbf{y}^{n+1}] := E_{pre}[\mathbf{y}^{n+1}] + \frac{1}{2\tau^{2}}|\delta\mathbf{y}^{n+1}|_{H^2}^2$, and given $\tol > 0$, the iteration is terminated at the $N$-th step if 
\begin{equation}\label{eq:stop-criteria}
\tau^{-1}(E_{total}[\mathbf{y}^{N-1}] - E_{total}[\mathbf{y}^{N}]) < \tol.
\end{equation}

We resort to the continuous second-order dynamics in \eqref{eq:ODE} for a formal explanation of $N=\mathcal{O}(\tau^{-1})$. Since our scheme can be asymptotically viewed as a temporal discretization of \eqref{eq:ODE}, we can establish a connection between the iteration number and the terminal time $T$ of the continuous dynamics as $T = N\tau$. On the continuous level, the practical stopping criteria \eqref{eq:stop-criteria} can be interpreted as 
\begin{equation}\label{eq:stop-criteria-cont}
\Big|\frac{d}{dt}(\frac12\|\dot{Y}\|^2+E_{pre}[Y])\Big| < \tol.
\end{equation}
It is evident from \eqref{eq:ODE} that $\lim_{t\to\infty}\frac{d}{dt}(\frac12\|\dot{Y}\|^2+E_{pre}[Y])=0$, ensuring the existence of $T(\tol) > 0$, depending on the fixed $\tol > 0$, such that \eqref{eq:stop-criteria-cont} is satisfied. Consequently, on the discrete level, $N=T(\tol)\tau^{-1}=\mathcal{O}(\tau^{-1})$.

Returning to Theorem \ref{thm:admissibility-pre}, we can refine the estimate of constraint violation so that $\mathbf{y}^{N} \in \mathcal{A}^{\epsilon}_{g,1}$ with $\epsilon=C(\Omega,\alpha,\tol)\tau E_{pre}[\mathbf{y}^{0}]$. 
In Section \ref{sec:numerics}, all numerical experiments confirm that the constraint violation is of order $\mathcal{O}(\tau)$ and the constant $C(\Omega,\alpha,\tol)$ stabilizes as $\tol\to0$ (so does the constraint violation).   
\end{remark}

\subsection{Backtracking and restarting}\label{sec:backtrack}
It is important to note that Algorithm \ref{algo:acc_flow} cannot guarantee a monotonic decrease of potential energies $E=E_{pre}$. 
However, we can further adopt a backtracking and restarting strategy whenever the potential energy increases.  {Specifically, when $E_{pre}[\vy^{n+1}] > E_{pre}[\vy^{n}]$, we reject the update $\vy^{n+1}$, revert to $\vy^n$, and recompute a new update via \eqref{eq:nesterov_scheme} with $\eta^{n+1}$ reinitialized with the choice \eqref{eq:eta-nesterov} and $n=1$.}  

The update immediately after the backtracking and restarting gives us $\eta^{n+1}=0$ and $\vw^{n+1}=\vy^{n+1}=\vy^n$, which further implies that the immediate next step with \eqref{eq:nesterov_scheme} is a gradient flow update with time step $\tau^2$. This 
guarantees a monotone decay of the potential energy $E=E_{pre}$.

On the continuous level, i.e., referring to \eqref{eq:ODE}, such a backtracking and restarting scheme could correspond to discontinuous damping coefficients, which brings more challenges to analysis.
Additionally, we emphasize that this strategy does not apply to the scenario of $\eta^n=1-\beta\tau$, as $\eta^n$ remains constant in the flow. 
A potential adjustment may involve modifying $\beta$ in the flow to use a gradient flow step alternatively for reducing $E$. We defer the exploration of such a time-dependent $\beta$ to future studies.

 {The modified algorithm is presented in Algorithm \ref{algo:acc_flow_backtrack}.}

\subsection{ {BDF2 temporal discretization}}\label{sec:BDF2}
Inspired by \cite{akrivis2023quadratic} for gradient flows, we modify \eqref{eq:nesterov_scheme} using a BDF2 approximation to time derivative:
\[\dot{\vy}^n\approx\frac{3\vy^n-4\vy^{n-1}+\vy^{n-2}}{2\tau}.\] 
This modification leads to a higher order approximation of constraints in \cite{akrivis2023quadratic}, and we expect to achieve similar improvements computationally for accelerated flows. The new scheme is described in Algorithm \ref{algo:acc_flow_BDF2}.
It is crucial to compute increments in the tangent space of an extrapolation $\delta\vy^{n+1}\in\mathcal{F}(2\vy^n-\vy^{n-1})$ such that  
\begin{equation}\label{eq:nesterov_scheme_quad_cons}
\tau^{-2}(\delta\vy^{n+1},\vv)_{H^2} + \frac23a_g(\delta\vy^{n+1},\vv)=a_g(-\frac43\vw^n+\frac13\vw^{n-1},\vv)+\tau^{-2}(\vw^n-\vy^{n},\vv)_{H^2}.
\end{equation}
The numerical performance of this algorithm (cf. Algorithm \ref{algo:acc_flow_BDF2}) is reported in Section \ref{sec:numerics}. In our accompanying paper \cite{dong2024bdf} we establish estimates on constraint violations of BDF2-BDF4 scheme for accelerated flows with tangent space update.    

\section{ {Implicit-Explicit accelerated gradient flows}}\label{sec:bilayer}
 {We next propose implicit~-~explicit (IMEX) accelerated flows for bilayer plates \eqref{bilayer-energy-3}, where the non-convex part of the energy $E^{nc}_{bi}$ is treated explicitly while the convex part remains implicit. This scheme extends classical IMEX methods by combining acceleration flows with the tangent space update strategy as in Section~\ref{sec:prestrained}, which linearizes the isometry constraint $\nabla\vy^T\nabla\vy~=~I_2$.
We structure this section similarly as Section \ref{sec:acc-flow}.}
 
\subsection{ {BDF1 iterative scheme}}\label{sec:schemes-bi} 
Given $\vy^0\in\mathcal{A}$ that satisfies the constraint and boundary condition as in \eqref{iso-admissible} and setting $\vw^0=\vy^0$, we compute the increment $\delta\vy^{n+1}\in\mathcal{F}(\vy^n)$ in each iteration and update by
\begin{align}\label{eq:bilayer_nesterov_scheme}
\tau^{-2}(\delta\vy^{n+1},\vv)_{H^2} + a(\delta\vy^{n+1},\vv)&=\ell[\vw^n](\vv)-a(\vw^n,\vv)+\tau^{-2}(\vw^n-\vy^{n},\vv)_{H^2} \\ \label{eq:update_y_bi}
\vy^{n+1}&=\vy^{n}+\delta\vy^{n+1} \\ \label{eq:update_w_bi}
\vw^{n+1}&=\vy^{n+1}+\eta^{n+1}\delta\vy^{n+1},
\end{align}
where $\eta^n=\frac{n-1}{n+\alpha -1}$ or $\eta^n=1-\beta\tau$. Here, $a(\vy,\vv):=\int_{\Omega}D^2\vy:D^2\vv$, and we define $\ell[\vy](\cdot):=\delta E_{bi}^{nc}[\vy](\cdot)$ as 
\begin{align}\label{eq:def-l} 
\ell[\vy](\vv):=&\sum_{i,j=1}^2\int_{\Omega}\Big(\partial_{ij}\vv\cdot(\partial_1\vy\times\partial_2\vy)+\partial_{ij}\vy\cdot(\partial_1\vv\times\partial_2\vy+\partial_1\vy\times\partial_2\vv)\Big)Z_{ij}.
\end{align}

 {Comparing to the prestrained plate problem, the key distinction lies in the incorporation of $\ell[\vw^n](\vv)$, which handles the non-convex energy component $E_{bi}^{nc}$ explicitly. As for the remaining aspects, this implicit-explicit accelerated flow method for bilayer plates formally shares the same structure as the semi-implicit one for prestrained plates as presented in Algorithm~\ref{algo:acc_flow}.} 

\subsection{ {Energy stability and admissibility}}\label{sec:property-bi}
By Sobolev embedding of $H^1$ into $L^4$ and H\"older inequality, for any $\vv\in[H^2_0(\Omega)]^3$ the following estimate is valid: 
\begin{equation}\label{eq:est-ln}
|\ell[\vy](\vv)|\le C(\Omega,Z)|\vv|_{H^2}\|\vy\|_{H^2}^2.
\end{equation}
Similar to \eqref{eq:nesterov_scheme_2}, we rewrite \eqref{eq:bilayer_nesterov_scheme} as 
\begin{align}\label{eq:bilayer_nesterov_scheme_2}
\frac{1}{\tau^{2}}(\delta\vy^{n+1}-\delta\vy^{n},\vv)_{H^2}&+\frac{1-\eta^{n}}{\tau^{2}}(\delta\vy^n,\vv)_{H^2}+a(\vy^n,\vv)-\ell[\vw^n](\vv) \\ \nonumber
&+\eta^{n}a(\delta\vy^n,\vv)+a(\delta\vy^{n+1},\vv)=0.
\end{align} 
We can further replace $\ell[\vw^n](\vv)$ in \eqref{eq:bilayer_nesterov_scheme_2} by $\ell[\vy^n](\vv)$ after introducing the remainder $R^n(\vv)$, which is defined as   
\begin{equation}\label{def:Rn}
R^n(\vv):=\ell[\vw^n](\vv)-\ell[\vy^n](\vv).
\end{equation}
Substituting back to \eqref{eq:bilayer_nesterov_scheme_2}, we get 
\begin{align}\label{eq:bilayer_nesterov_scheme_3}
\frac{1}{\tau^{2}}(\delta\vy^{n+1}-\delta\vy^{n},\vv)_{H^2}&+\frac{1-\eta^{n}}{\tau^{2}}(\delta\vy^n,\vv)_{H^2}+a(\vy^n,\vv)-\ell[\vy^n](\vv)\\ \nonumber
&+\eta^{n}a(\delta\vy^n,\vv)+a(\delta\vy^{n+1},\vv)=R^n(\vv).
\end{align}

It is worth noting that $R^n(\vv)$ has an upper bound as follows. 
\begin{lemma}[Estimate of remainder]\label{lem:est-Rn}
The following estimate holds for $R^n(\vv)$ defined by \eqref{def:Rn} and any $\vv\in[H^2_0(\Omega)]^3$: 
\begin{equation}\label{eq:est-Rnh}
|R^n(\vv)|\le C(\Omega,Z)\Big(|\vv|_{H^2}|\delta\vy^n|_{H^2}\Big(|\vy^{n}|_{H^2}+C(\vvarphi,\Phi)\Big)+|\vv|_{H^2}|\delta\vy^n|^2_{H^2}\Big).
\end{equation}
\end{lemma}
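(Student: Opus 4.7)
The strategy is to exploit the trilinear structure of $\ell$: as a functional on $(\vy,\vv)$, it is quadratic in the first slot $\vy$ and linear in the test slot $\vv$. Writing $\vw^n=\vy^n+\eta^n\delta\vy^n$ and expanding $\ell[\vy^n+\eta^n\delta\vy^n](\vv)-\ell[\vy^n](\vv)$ in powers of $\delta\vy^n$ produces two groups of terms: a collection that is linear in $\delta\vy^n$ (with either one factor of $\vy^n$ or one factor from the $\partial_{ij}\vy$ position and one from the cross-product position) and a collection that is quadratic in $\delta\vy^n$ (where both vector factors in the cross product, or the mixed derivative and one cross-product factor, are $\delta\vy^n$). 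Since $|\eta^n|\le 1$ under either choice of $\eta^n$, the factor $\eta^n$ (respectively $(\eta^n)^2$) can be absorbed into the generic constant.

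For the linear-in-$\delta\vy^n$ group, each summand has the schematic form $\int_\Omega Z_{ij}\,(D^2\text{-factor})\cdot(\nabla\text{-factor}\times\nabla\text{-factor})$ with exactly one $\vv$, one $\delta\vy^n$, and one $\vy^n$. I would apply Hölder's inequality with exponents $(\infty,2,4,4)$ against $(Z,D^2(\cdot),\nabla(\cdot),\nabla(\cdot))$ and use the Sobolev embedding $H^1(\Omega)\hookrightarrow L^4(\Omega)$ to obtain a bound of the form $C(\Omega,Z)\,|\vv|_{H^2}|\delta\vy^n|_{H^2}\|\vy^n\|_{H^2}$, just as in the derivation of \eqref{eq:est-ln}. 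To replace $\|\vy^n\|_{H^2}$ by $|\vy^n|_{H^2}+C(\vvarphi,\Phi)$, I would invoke the standard Poincaré-type inequality with inhomogeneous Dirichlet data, using the boundary conditions $\vy^n=\vvarphi$, $\nabla\vy^n=\Phi$ on $\Gamma^D$; this produces the additive $C(\vvarphi,\Phi)$ term in the estimate.

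For the quadratic-in-$\delta\vy^n$ group, each summand contains $\vv$ once and $\delta\vy^n$ twice, with no occurrence of $\vy^n$. The same Hölder/Sobolev argument, now with both $L^4$ factors coming from $\delta\vy^n$, yields $C(\Omega,Z)\,|\vv|_{H^2}|\delta\vy^n|_{H^2}^2$. Note that since $\delta\vy^n\in[H^2_0(\Omega)]^3$, no boundary contributions appear in this group. Summing the two groups of estimates delivers \eqref{eq:est-Rnh}.

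The proof is essentially computational; no genuinely new ideas are required beyond those already used in \eqref{eq:est-ln}. The main nuisance is the bookkeeping, since the linear expansion generates several terms (one from each of the three $\vy$-slots in $\ell[\vy](\vv)$, with $\vy^n$ or $\delta\vy^n$ filling the two copies of $\vy$), and one must keep track of which factors carry the boundary data so that the Poincaré step is applied only where necessary. I do not anticipate any serious analytical obstacle beyond this enumeration.
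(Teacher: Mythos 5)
Your proposal is correct and follows essentially the same route as the paper: expand $\ell[\vw^n](\vv)-\ell[\vy^n](\vv)$ using the trilinear structure of $\ell$, absorb $\eta^n$ via $|\eta^n|\le 1$, and estimate the resulting terms (linear and quadratic in $\delta\vy^n$) with H\"older, the embedding $H^1(\Omega)\hookrightarrow L^4(\Omega)$, and a Poincar\'e inequality accounting for the boundary data. The paper's proof is exactly this computation, stated even more tersely.
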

\begin{proof}
Substituting the definition of $\vw^n$ in \eqref{eq:update_w_bi} into the expression \eqref{eq:def-l} of linear functional $\ell$ and calculating the difference in \eqref{def:Rn},
we can explicitly express $R^n$ as a sum of several integrals involving $\vv,\vy^n,\delta\vy^n$. Here, we omit the tedious expression, while the calculation is straightforward.  
We then estimate each term by using H\"older inequality, Poincar\'e inequality, the embedding of $H^1(\Omega)$ into $L^4(\Omega)$, and the fact that $|\eta^n|<1$, to arrive at \eqref{eq:est-Rnh}.
\end{proof}

The energy $E_{bi}$ is obviously coercive when $\vy$ satisfies $\nabla\vy^T\nabla\vy=I_2$ exactly. However, since the isometry constraint is not kept in our schemes, we need to show coercivity under a relaxed constraint for $\vy\in\A^{\epsilon}_{I_2,2}$. Here, the use of $L^2$ norm in $\A^{\epsilon}_{I_2,2}$ is critical to estimate the cubic term $E^{nc}_{bi}$; $L^1$ norm seems to be not enough.
\begin{lemma}[Coercivity]\label{lem:coercivity-bi}
Let $\epsilon>0$ and $\vy\in\A^{\epsilon}_{I_2,2}$. There exists a positive constant $C(\Omega)$
such that 
\begin{equation}\label{eq:coercivity-bi}
\|\vy\|_{H^2}^2\le C(\Omega)\Big(E_{bi}[\vy]
+C(\vvarphi,\Phi)+\big(\epsilon^2+2|\Omega|\big)\|Z\|_{L^{\infty}}^2\Big).
\end{equation}
\end{lemma}
\begin{proof}
We first note that 
\begin{align}\label{eq:est-L4-grady}
\|\nabla\vy\|_{L^4}^4&\le2\|\nabla\vy^T\nabla\vy\|_{L^2}^2\le4\big(\|\nabla\vy^T\nabla\vy-I_2\|_{L^2}^2+\|I_2\|_{L^2}^2\big)=4\big(\epsilon^2+2|\Omega|\big).
\end{align}
Using \eqref{eq:est-L4-grady}, H\"older inequality and Cauchy inequality to estimate $E^{nc}_{bi}[\vy]$, we obtain  
\begin{align}\label{eq:coercivity-bi-1}
|\vy|_{H^2}^2 \le 2E_{bi}[\vy]+32\big(\epsilon^2+2|\Omega|\big)\|Z\|_{L^{\infty}}^2+\frac{1}{2}|\vy|_{H^2}^2.
\end{align}
Then, incorporating the Poincar\'e inequality $\|\vy\|_{H^2}^2\le C(\Omega)(|\vy|_{H^2}^2+C(\vvarphi,\Phi))$ we conclude the proof. 
\end{proof}

Note that as $\vw^0=\vy^0$, taking $n=0$ in \eqref{eq:bilayer_nesterov_scheme} provides a step of gradient flow with the time-step $\tau^2$, and therefore we have the energy stability estimate \cite{bonito2023gamma}
\begin{equation}\label{eq:energy-stability-step1-bi}
E_{bi}[\vy^{1}]+\frac{1}{2\tau^{2}}|\delta\vy^{1}|_{H^2}^2\le E_{bi}[\vy^{0}].
\end{equation} 

By exploiting the estimate of remainder in Lemma \ref{lem:est-Rn}, the coercivity in Lemma \ref{lem:coercivity-bi}, and considering an induction argument, we establish the total energy stability as follows for the semi-implicit accelerated gradient flow scheme of bilayer plates. The rather tedious technical details of proof is given in the Appendix. 

\begin{thm}[Total energy stability for bilayer plates]\label{thm:energy-stab-second-bi}
Let $\vy^0\in\A$ and $M=1,\ldots,N-1$ be the iteration number of accelerated flow defined by \eqref{eq:bilayer_nesterov_scheme}-\eqref{eq:update_w_bi}. We further assume the total number of iterations $N=\alpha_0\tau^{-1}$ (as discussed in Remark \ref{rmk:control-violation}) with a constant $\alpha_0>0$ when $\eta^n=\frac{n-1}{n+\alpha-1}$. 
There exists a constant $\alpha_1=\alpha_1(\vy^0,\vvarphi,\Phi,Z,\Omega,\alpha,\alpha_0)>0$ independent of $M$ such that if $\tau<\alpha_1$, then the following energy stability estimate is valid: 
{\footnotesize
\begin{equation}\label{eq:bilayer-second-energy-onestep}
\frac{1}{2\tau^{2}}|\delta\vy^{M+1}|_{H^2}^2+E_{bi}[\vy^{M+1}]+\frac{1-\eta^{M}}{4\tau^{2}}\Big(|\delta\vy^{M+1}|_{H^2}^2+|\delta\vy^{M}|_{H^2}^2\Big)\le \frac{1}{2\tau^{2}}|\delta\vy^{M}|_{H^2}^2+E_{bi}[\vy^{M}].
\end{equation}
}
In addition, there exists $\alpha_2=\alpha_2(\Omega,\alpha,\alpha_0)>0$, independent of $M$, such that 
\begin{equation}\label{eq:cons-control-bi}
D_{I_2,2}[\vy^{M+1}]\le\alpha_2\tau E_{bi}[\vy^0].
\end{equation}
\end{thm}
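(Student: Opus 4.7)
The plan is to prove both assertions simultaneously by strong induction on $M$, using a hypothesis at level $M$ that couples the one-step inequality \eqref{eq:bilayer-second-energy-onestep} for indices $1,\ldots,M-1$ with the constraint bound $D_{I_2,2}[\vy^m]\le\alpha_2\tau E_{bi}[\vy^0]$ for $m\le M$. The coupling is essential because the coercivity estimate \eqref{eq:coercivity-bi} applied at $\vy^M$ demands a uniform bound on $D_{I_2,2}[\vy^M]$, which is itself obtained from the previous energy inequalities via the tangent plane identity. The base case $M=1$ uses $\vw^0=\vy^0$ so that the step $n=0$ reduces to a gradient flow yielding \eqref{eq:energy-stability-step1-bi}, from which the constraint bound $D_{I_2,2}[\vy^1]\le C(\Omega)\tau^2 E_{bi}[\vy^0]\le\alpha_2\tau E_{bi}[\vy^0]$ follows by the tangent plane identity described below for small $\tau$.

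For the inductive step, the key algebraic identity comes from testing \eqref{eq:bilayer_nesterov_scheme_3} with $\vv=\delta\vy^{M+1}$ and applying the splittings of \eqref{eq:splitting} with $a$ in place of $a_g$. The novel contribution is the linear term $-\ell[\vy^M](\delta\vy^{M+1})$: since $E_{bi}^{nc}$ is a cubic functional in $\vy$, an exact Taylor expansion around $\vy^M$ yields
\[
-\ell[\vy^M](\delta\vy^{M+1})=E_{bi}^{nc}[\vy^{M+1}]-E_{bi}^{nc}[\vy^M]-Q^M-C^M,
\]
with explicit quadratic and cubic remainders satisfying $|Q^M|\le C(\Omega,Z)\|\vy^M\|_{H^2}|\delta\vy^{M+1}|_{H^2}^2$ and $|C^M|\le C(\Omega,Z)|\delta\vy^{M+1}|_{H^2}^3$. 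Combined with the quadratic identity for $a$, this produces an identity of the form $\Delta E_{total}+\text{dissipation}=R^M(\delta\vy^{M+1})+Q^M+C^M$, where $E_{total}[\vy^n]:=\frac{1}{2\tau^2}|\delta\vy^n|_{H^2}^2+E_{bi}[\vy^n]$. Telescoping the inductive hypothesis over $m\le M-1$ yields $E_{total}[\vy^M]\le E_{bi}[\vy^0]$, and together with Theorem~\ref{thm:coercivity-bi} (applicable since $\epsilon=\alpha_2\tau E_{bi}[\vy^0]$ can be made arbitrarily small by choosing $\tau$ small) delivers $\|\vy^M\|_{H^2}\le C_0$ and $|\delta\vy^M|_{H^2}\le C_1\tau$ with $C_0,C_1$ independent of $M$. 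Plugging these into Lemma~\ref{lem:est-Rn} gives $|R^M(\delta\vy^{M+1})|\le C\tau|\delta\vy^{M+1}|_{H^2}$, absorbed into a fraction of the dissipation $\frac{1-\eta^M}{4\tau^2}|\delta\vy^{M+1}|_{H^2}^2$ by Young's inequality; the quadratic Taylor term $|Q^M|\le CC_0|\delta\vy^{M+1}|_{H^2}^2$ is absorbed into $\frac{1}{2\tau^2}|\delta\vy^{M+1}|_{H^2}^2$ for $\tau$ small, and the cubic term $|C^M|$ is handled by a bootstrap: tentatively $|\delta\vy^{M+1}|_{H^2}\le C_1\tau$ turns $|C^M|\le C\tau|\delta\vy^{M+1}|_{H^2}^2$, and the resulting estimate verifies the assumption self-consistently for $\tau<\alpha_1$.

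The constraint bound at $M+1$ then follows from the pointwise identity $\I[\vy^{n+1}]-\I[\vy^n]=(\nabla\delta\vy^{n+1})^T\nabla\delta\vy^{n+1}$, valid because $\delta\vy^{n+1}\in\mathcal{F}(\vy^n)$; taking $L^2$ norms, applying the Sobolev embedding $H^1\hookrightarrow L^4$ (which is precisely why the $L^2$ rather than $L^1$ deficit is chosen in $D_{I_2,2}$), and telescoping yields
\[
D_{I_2,2}[\vy^{M+1}]\le C(\Omega)\sum_{n=0}^M|\delta\vy^{n+1}|_{H^2}^2\le C(\Omega,C_1)(M+1)\tau^2\le\alpha_2\tau E_{bi}[\vy^0],
\]
using $M+1\le N=\alpha_0\tau^{-1}$ and choosing $\alpha_2=\alpha_2(\Omega,\alpha,\alpha_0)$ accordingly. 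The principal difficulty is the circular dependence between the energy inequality (needing coercivity at $\vy^M$, hence $\|\vy^M\|_{H^2}$ bounded) and the constraint bound (needing energy control for all previous steps): the strong induction, paired with a single uniform smallness threshold $\tau<\alpha_1$ depending on $\vy^0,\vvarphi,\Phi,Z,\Omega,\alpha,\alpha_0$, is what resolves this coupling.
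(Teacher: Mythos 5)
Your proposal follows essentially the same route as the paper: induction coupling the one-step energy inequality with the constraint bound, coercivity on the relaxed admissible set, the exact cubic expansion of $\ell[\vy^M](\delta\vy^{M+1})$ into $E_{bi}^{nc}[\vy^{M+1}]-E_{bi}^{nc}[\vy^M]$ plus quadratic/cubic remainders, absorption into the dissipation using $1-\eta^M\gtrsim\tau$ from $N=\alpha_0\tau^{-1}$, and telescoping the tangent-plane identity for $D_{I_2,2}$. One small caveat: to obtain $\alpha_2=\alpha_2(\Omega,\alpha,\alpha_0)$ with the factor $E_{bi}[\vy^0]$ as stated, you should bound $\sum_n|\delta\vy^{n+1}|_{H^2}^2$ by telescoping the summed dissipation (as the paper does), rather than via the uniform bound $|\delta\vy^{n+1}|_{H^2}\le C_1\tau$, which would smuggle the dependence of $C_1$ on $\vy^0,\vvarphi,\Phi,Z$ into $\alpha_2$.
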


Moreover, analogous to Corollary \ref{cor:kinetic-lim} for prestrained plates, we conclude that $\lim_{n\to\infty}|\delta\vy^{n}|_{H^2}=0$ for bilayer plates, implying that the kinetic energy converges to $0$ as well.

\subsection{ {Backtracking and BDF2 discretization}}\label{sec:variants_bi} 
 {Variants of algorithms for bilayer plates are analogous to the methods developed in Sections~\ref{sec:backtrack} and~\ref{sec:BDF2}.
For the BDF2 scheme, we compute $\delta\vy^{n+1}\in\mathcal{F}(2\vy^n-\vy^{n-1})$ satisfying  
\begin{align}\label{eq:nesterov_scheme_quad_cons_bi}
&\tau^{-2}(\delta\vy^{n+1},\vv)_{H^2} + \frac{2}{3}a(\delta\vy^{n+1},\vv) \\ \nonumber
&=\ell[\vw^n](\vv)+a\left(-\frac{4}{3}\vw^n+\frac{1}{3}\vw^{n-1},\vv\right)+\tau^{-2}(\vw^n-\vy^{n},\vv)_{H^2},
\end{align}
where $\ell[\vw^n](\vv)$ is the only modification compared to \eqref{eq:nesterov_scheme_quad_cons}. 
The remaining methodology follows identically, with implementations detailed in Algorithms~\ref{algo:acc_flow_backtrack} and~\ref{algo:acc_flow_BDF2}.
}

\section{ {Algorithms and spatial discretization}}\label{sec:alg}
\subsection{ {Summary of algorithms}}\label{sec:sum-alg}
 {All the algorithms proposed in the previous sections are collected here in a nutshell.
Denoting 
\begin{equation}\label{eq:E-total}
    E_{total}[\mathbf{y}^{n+1}] := E[\mathbf{y}^{n+1}] + \frac{1}{2\tau^{2}}|\delta\mathbf{y}^{n+1}|_{H^2}^2,
\end{equation}
with $E=E_{pre}$ or $E_{bi}$, Algorithm \ref{algo:acc_flow} summarizes the working procedure of the accelerated flows in combination with tangent space update strategy for prestrained or bilayer plates, where the choice of $\eta^n$ is either \eqref{eq:eta-nesterov} or \eqref{eq:eta-heavy-ball}.} 
 
\begin{algorithm}
\caption{ {Semi-Implicit/Implicit-Explicit} accelerated gradient flows}
\label{algo:acc_flow}
\begin{algorithmic}[1]
\STATE{Choose a pseudo time-step $\tau>0$, a tolerance constant tol and parameter $\alpha\ge3$ or $0<\beta\le1/\tau$}
\STATE{Choose $\vy^0\in\A_g$ (\textbf{prestrained plates}) or  $\vy^0\in\A$ (\textbf{bilayer plates}), set $\vw^0=\vy^0$, $n=0$ and $\Delta E_{total}=\infty$}
\WHILE{$\tau^{-1}\Delta E_{total}>$ tol}
\STATE{\textbf{Solve} \eqref{eq:nesterov_scheme} ({\bf prestrained plates}) or \eqref{eq:bilayer_nesterov_scheme} ({\bf bilayer plates}) for $\delta\vy^{n+1}\in\mathcal{F}(\vy^n)$}\\
\STATE{\textbf{Update} $\vy^{n+1} = \vy^{n}+\delta\vy^{n+1}$, $\eta^{n+1}=\frac{n}{n+\alpha}$ or $\eta^{n+1}\equiv1-\beta\tau$, \\  $\vw^{n+1}=\vy^{n+1}+\eta^{n+1}\delta\vy^{n+1}$
}
\STATE{\textbf{Set} $\Delta E_{total}=E_{total}[\vy^{n+1}]-E_{total}[\vy^n]$ and $n=n+1$}
\ENDWHILE
\RETURN $\vy^n$
\end{algorithmic}
\end{algorithm}

 {Furthermore, Algorithm~\ref{algo:acc_flow_backtrack} presents the semi-implicit/implicit-explicit accelerated flows with the backtracking strategy from Sections~\ref{sec:backtrack} and~\ref{sec:variants_bi}, while Algorithm~\ref{algo:acc_flow_BDF2} incorporates the BDF2 temporal discretization introduced in Sections~\ref{sec:BDF2} and~\ref{sec:variants_bi}.}

\begin{algorithm}
\caption{ {Accelerated flows with backtracking}}
\label{algo:acc_flow_backtrack}
\begin{algorithmic}[1]
\STATE{Given a pseudo time-step $\tau>0$, a tolerance $tol$ and parameter $\alpha\ge3$}
\STATE{Choose $\vy^0\in\A_g$ (\textbf{prestrained plates}) or  $\vy^0\in\A$ (\textbf{bilayer plates}), set $\vw^0=\vy^0$, $n=0$, $k=0$ and $\Delta E_{total}=\infty$}
\WHILE{$\tau^{-1}\Delta E_{total}>$tol}
\STATE{\textbf{Solve} \eqref{eq:nesterov_scheme} (\textbf{prestrained plates}) or \eqref{eq:bilayer_nesterov_scheme} (\textbf{bilayer plates}) for $\delta\vy^{n+1}\in\mathcal{F}(\vy^n)$}\\
\STATE{\textbf{Update} $\vy^{n+1} = \vy^{n}+\delta\vy^{n+1}$}
\STATE{ \textbf{Set} $\Delta E_{total}=E_{total}[\vy^{n+1}]-E_{total}[\vy^n]$ and $\Delta E=E[\vy^{n+1}]-E[\vy^n]$ }
\IF{$\Delta E<0$} 
\STATE{\textbf{Set}  $k=k+1$}
\ELSE{}
\STATE{\textbf{Backtrack} $\vy^{n+1}=\vy^{n}$}
\STATE{\textbf{Restart} $k=1$}
\ENDIF
\STATE{\textbf{Update} }
  $\eta^{n+1}=\frac{k-1}{k-1+\alpha}$ and $\vw^{n+1}=\vy^{n+1}+\eta^{n+1}\delta\vy^{n+1}$
\STATE{ \textbf{Set}  $n=n+1$}  
\ENDWHILE
\RETURN $\vy^n$
\end{algorithmic}
\end{algorithm}

\begin{algorithm}
\caption{ {Accelerated flows with BDF2 discretization}}
\label{algo:acc_flow_BDF2}
\begin{algorithmic}[1]
\STATE{Given a pseudo time-step $\tau>0$, a tolerance $tol$ and parameter $\alpha\ge3$}
\STATE{Choose $\vy^0\in\A_g$ (\textbf{prestrained plates}) or  $\vy^0\in\A$ (\textbf{bilayer plates}),   set $\vw^0=\vy^0$, $n=0$, $k=0$ and $\Delta E_{total}=\infty$}
\STATE{Compute $\delta\vy^{1}\in\mathcal{F}(\vy^0)$ by \eqref{eq:nesterov_scheme} (\textbf{prestrained plates}) or \eqref{eq:bilayer_nesterov_scheme} (\textbf{bilayer plates})}
\STATE{Update $\vy^1=\vy^0+\delta\vy^1$,  $\vw^1=\vy^1$ and $n=n+1$}
\WHILE{$\tau^{-1}\Delta E_{total}>$tol}
\STATE{\textbf{Solve} \eqref{eq:nesterov_scheme_quad_cons} (\textbf{prestrained plates}) or \eqref{eq:nesterov_scheme_quad_cons_bi} (\textbf{bilayer plates}) \\
for $\delta\vy^{n+1}\in\mathcal{F}(2\vy^n-\vy^{n-1})$}
\STATE{\textbf{Update} $\vy^{n+1}=\frac43\vy^{n}-\frac13\vy^{n-1}+\frac23\delta\vy^{n+1}$}
\STATE{ \textbf{Set} $\Delta E_{total}=E_{total}[\vy^{n+1}]-E_{total}[\vy^n]$ and $\Delta E=E[\vy^{n+1}]-E[\vy^n]$ }
\IF{$\Delta E<0$} 
\STATE{\textbf{Set}  $k=k+1$}
\ELSE{}
\STATE{\textbf{Backtrack} $\vy^{n+1}=\vy^{n}$}
\STATE{\textbf{Restart} $k=1$}
\ENDIF
\STATE{\textbf{Update} }
  $\eta^{n+1}=\frac{k-1}{k-1+\alpha}$ and $\vw^{n+1}=\vy^{n+1}+\eta^{n+1}\delta\vy^{n+1}$
\STATE{ \textbf{Set}  $n=n+1$} 
\ENDWHILE
\RETURN $\vy^n$
\end{algorithmic}
\end{algorithm} 

\subsection{ {Spatial discretization}}\label{sec:sp-dis}
 {In the following we provide a concise description of the spatial discretization employed in our numerical simulations.}

We utilize Morley FEM \cite{morley1968triangular} to discretize the proposed accelerated gradient flows spatially. 
However, the semi-discrete schemes presented in Algorithms \ref{algo:acc_flow}, \ref{algo:acc_flow_backtrack}  and \ref{algo:acc_flow_BDF2} should work well along with any other suitable FEMs for spatial discretization, such as Kirchhoff FEM, IPDG, and LDG implemented in previous studies on nonlinear plates models \eqref{bilayer-energy-3} and \eqref{prob:min_Eg}.

We consider a shape-regular triangulation $\Th$ of $\Omega\subset\mathbb{R}^2$ fitted with the boundary $\partial\Omega$. 
We denote the set of vertices of the mesh $\Th$ by $\Nh$, and the set of edges by $\Eh$. In particular $\Eh=\Eh^0\cup\Eh^D$, where $\Eh^D:=\{E\in\Eh:E\subset\Gamma^D\}$ is the set of Dirichlet boundary edges and $\Eh^0:=\Eh\setminus\Eh^D$. Similarly, we define $\Nh^D:=\{z\in\Nh:z\subset\Gamma^D\}$.
For any edge $E\in\Eh$, the midpoint of $E$ is denoted as $m_E$. Moreover, $\vnu:\Eh\to\mathbb{S}^1$ denotes the unit normal vectors to each $E\in\Eh$, where its orientation is arbitrary but remains fixed upon selection.
The Morley FEM was originally proposed in \cite{morley1968triangular} with the \emph{nonconforming} finite element space: 
\begin{align*}
\V_h=&\{v_h\in L^2(\Omega):v_h|_T\in\mathbb{P}_2(T), \forall T\in\Th, v_h\text{ is continuous at }z, \forall z\in\Nh \\&\nabla v_h\cdot\vnu\text{ is continuous at }m_E, \forall E\in\Eh\}.
\end{align*}  
Moreover, we define the discrete space of vanishing boundary values as:
\begin{equation}
\V_h^D=\{v_h\in\V_h: \text{dofs of } v_h \text{ are vanishing on }E\in\Eh^D\}.
\end{equation}
We next define the following useful quadrature rule for any $T\in\Th$:
\begin{equation}\label{eq:quad-rule}
Q_T(\phi):=\frac{|T|}{3}\sum_{E\in\Eh^T}\phi(m_E),
\end{equation}
where $\Eh^T$ denotes the set of three sides of triangle and $|T|$ denotes the area of $T$. $Q_T(\phi)$ is an approximation of $\int_T\phi$. 
We define the discrete counterparts $E_{pre}^h$ and $E_{bi}^h$ to $E_{pre}$ and $E_{bi}$ for discrete deformations $\vy_h\in[\V_h]^3$ as follows: 
\begin{equation}\label{eq:discrete-energy-prestrain}
E_{pre}^h\left[\vy_h\right]:= \sum_{T\in\Th}\frac{\mu}{12}\int_{T}\left|\g^{-\frac{1}{2}}D^2\vy_h\g^{-\frac{1}{2}}\right|^2+\frac{\lambda}{2\mu+\lambda}\tr\left(\g^{-\frac{1}{2}}D^2\vy_h\g^{-\frac{1}{2}}\right)^2,
\end{equation}
and 
\begin{equation}\label{eq:discrete-energy-bilayer}
E_{bi}^h\left[\vy_h\right]:= \sum_{T\in\Th}\frac{1}{2}\int_{T}\big|D^2\vy_h\big|^2-\sum_{i,j=1}^2\sum_{T\in\Th}Q_{T}\big(\partial_{ij}\vy_h\cdot(\partial_1\vy_h\times\partial_2\vy_h)Z_{ij}\big).
\end{equation}
In the practical implementation of all the proposed accelerated flows, considering the given $\vy^n_h\in[\V_h]^3$, the linearized constraint at $n$-th step is to restrict the increment in the tangent space
$\mathcal{F}_h(\vy_h^n):=\{\vv_h\in[\V_h^D]^3:L_T(\vy_h^n,\vv_h)=0, \forall T\in\Th\}$,
where
\begin{equation}\label{eq:linearized-cons}
L_T(\vy_h^n,\vv_h):=Q_T\big(\nabla \vv_h^T\nabla\vy_h^n+(\nabla\vy_h^n)^T\nabla\vv_h\big).
\end{equation}

We emphasis that all the algorithms presented above can be easily adapted to the fully discretized version by directly substituting everything with their discrete counterparts. The corresponding analysis in the spatially discretized cases is ignored but the general framework in this paper is applicable.

\section{Numerical results}
\label{sec:numerics}
In this section, we validate the properties of accelerated flows, examine the effect of key parameters $\alpha$ and $\beta$, as well as different choices of $\eta^n$ as adaptive and constancy in \eqref{eq:eta-nesterov} and \eqref{eq:eta-heavy-ball} respectively. We also compare our proposed algorithms with existing methods based on gradient flows.

\subsection{ {Benchmark examples}}\label{sec:numerical-ex}
We introduce the setup of several benchmark numerical examples by which we illustrate the performance of our new methods. 

\textbf{Example 1: Nonlinear Kirchhoff plates with vertical load}.
We take a square domain $\Omega=(0,4)^2$ and $g=I_2$ in the prestrained model, while adding a vertical load $\vf=(0,0,0.025)^T$. 
The corresponding energy is $E^h_{si}[\vy_h]:=E^h_{pre}[\vy_h]-\int_{\Omega}\vf\cdot\vy_h$, and it needs to be minimized over the admissible set \eqref{iso-admissible}, which includes a non-convex isometry constraint.
The plate is clamped on $\Gamma^D=\{0\}\times[0,4]\cup[0,4]\times\{0\}$, i.e. we prescribe the Dirichlet boundary condition with
\begin{equation}\label{eq:clamped-BC}
\vvarphi(x_1,x_2)=(x_1,x_2,0)^T \quad \mbox{and} \quad \Phi=[I_2,\mathbf{0}]^T \qquad (x_1,x_2) \in \Gamma^D.
\end{equation}
The initial state is the flat plate $\vy^0(x_1,x_2)=(x_1,x_2,0)^T$. This is the simplest test example for nonlinear plates as in previous works \cite{bartels2013approximation,bonito2019dg}. 

\textbf{Example 2: Bilayer plates with isotropic curvature}.
We consider a rectangular plate $\Omega=(-5,5)\times(-2,2)$ clamped on the side $\Gamma^D=\{-5\}\times[-2,2]$ with boundary condition \eqref{eq:clamped-BC}. The spontaneous curvature is isotropic and takes the form $Z=\gamma I_2$ with various values of $\gamma$. The exact solution to \eqref{bilayer-energy-3}, i.e. deformation with minimal energy, is a cylinder of radius $1/\gamma$ and energy $20\gamma^2$ \cite{schmidt2007minimal}. The initial state is the flat plate $\vy^0(x_1,x_2)=(x_1,x_2,0)^T$. In the rest of this section, we take $\gamma=1$ in simulations unless  otherwise specified (only in Section \ref{sec:local-global} we take $\gamma=5$). 

\textbf{Example 3: Prestrained plates with various ratios of eigenvalues}. We consider $\Omega=(-5,5)\times(-2,2)$ with the same boundary condition as in Example 2. We take metric $g$ to be 
\begin{equation}\label{eq:metric-ex3}
g(x_1,x_2)=\begin{bmatrix}
1+c^2(2(x_1+5)(x_1-2)+(x_1+5)^2)^2       & 0 \\
0       & 1
\end{bmatrix},
\end{equation}
with the parameter $c$ determining the ratios between two eigenvalues of $g(x_1,x_2)$ when $(x_1,x_2)$ is away from $\Gamma^D$. 
The initial state is $\vy^0(x_1,x_2)=(x_1,x_2,c(x_1+5)^2(x_1-2))$ satisfying $\I[\vy^0]=g$. We set Lam\'e parameters to be $\mu=12$ and $\lambda=0$. In the rest part of this section, we take $c=0.01$ or $0.1$, and the former corresponds to eigenvalues $3.56$ and $1$ on $x_1=5$,  while the latter gives eigenvalues $257$ and $1$ on $x_1=5$. With these choices, we will study the performance of algorithms when eigenvalues are comparable or have a relatively large ratio. 
   
\subsection{Properties of accelerated flows}\label{sec:numerics-properties}
We summarize the behavior of four cases under varying $\tau$ and a fixed spatial refinement of $512$ elements. The algorithms considered are: (i) Algorithm \ref{algo:acc_flow} with the choice $\eta^n = \frac{n-1}{n-1+\alpha}$, (ii) Algorithm \ref{algo:acc_flow} with the choice $\eta^n = 1 - \beta \tau$, (iii) Algorithm \ref{algo:acc_flow_backtrack}, as well as (iv) Algorithm \ref{algo:acc_flow_BDF2}. The results for these cases are presented in Tables \ref{tab:varying_tau_nesterov}, \ref{tab:varying_tau_heavyball}, \ref{tab:backtracking}, and \ref{tab:BDF2}, respectively.

We test these algorithms by Example 1 with $\tol=10^{-6}$, Example 2 with $\tol=10^{-4}$ and Example 3 with $\tol=10^{-6}$ and $c=0.01$ in \eqref{eq:metric-ex3}.      

In these simulations, we measure discrete counterparts $D^h_{g,p}$ of $D_{g,p}$, which is defined by 
\begin{equation}\label{eq:Dhgp}
D^h_{g,p}[\vy_h]=\Big(\sum_{T\in\Th}\Big|Q_T\big(\nabla\vy_h^T\nabla\vy_h-g\big)\Big|^p\Big)^{1/p}.
\end{equation}

We observe that $D^h_{g,p} \approx \mathcal{O}(\tau)$ in Tables \ref{tab:varying_tau_nesterov} and \ref{tab:varying_tau_heavyball}, consistent with Theorems \ref{thm:admissibility-pre}, \ref{thm:energy-stab-second-bi}, and Remark \ref{rmk:control-violation} for Algorithm \ref{algo:acc_flow} using both choices of $\eta^n$. 
Similarly, with the backtracking strategy, Algorithm \ref{algo:acc_flow_backtrack} also shows $D^h_{g,p} \approx \mathcal{O}(\tau)$ as seen in Table \ref{tab:backtracking}. For the BDF2 scheme, as in Table \ref{tab:BDF2}, $D^h_{g,p}$ is approximately $ \mathcal{O}(\tau^3) $ in Examples 1 and 2. 
However, in Example 3, both the computed energy and $D^h_{g,p}$ remain unchanged. This is because of the significant difference between $g$ and $I_2$, and a finer spatial discretization is required to observe a decrease in errors as $\tau$ decreases, as the spatial discretization error dominates in this case.

For a systematic discussion of constraint violations in high-order BDF schemes, we refer to our accompanying paper \cite{dong2024bdf}. Furthermore, $ N \approx \mathcal{O}(\tau^{-1}) $ for all cases with fixed tolerances, in agreement with Remark \ref{rmk:control-violation}.

Figure \ref{fig:Dh-plot} displays the change of $E_{bi}$ and $E_{total}$ over iterations in logarithmic scale for Example 2 computed by Algorithm \ref{algo:acc_flow}, confirming that the total energy decreases monotonically, while the potential energy exhibits oscillations. 

Furthermore, we investigate the behavior of Algorithm \ref{algo:acc_flow} for varying $\tol$ and a fixed $\tau$ in Table \ref{tab:kinetic-energy-tol} for Example 1. 
In particular, we observe that the discrete kinetic energy $E_{ki}^h[\vy^{N}_h]:=\frac{1}{2\tau^{2}}|\delta\vy^{N}_h|_{H^2}^2$ at the last step decreases  as $\tol$  becomes smaller, and is usually one magnitude smaller than $\tol$. This is consistent with Corollary \ref{cor:kinetic-lim}, where we show convergence of the kinetic energy to $0$ as $N$ tends to infinity. We also verify that $N$ increases without an upper bound as $\tol\to0$. Moreover, as $\tol\to 0$, $D^h_{I_2,1}$ appears to stabilize to a constant as in Table \ref{tab:kinetic-energy-tol}.

\begin{table}[h!]
\resizebox{\linewidth}{!}{
\begin{tabular}{|c|c|c|c|c|c|c|c|c|c|c|c|}
\hline
\multicolumn{4}{|c|}{Example 1}& \multicolumn{4}{|c|}{Example 2} &\multicolumn{4}{|c|}{Example 3} \\

\hline
$\tau$ & $E^h_{si}$ &  $D^h_{I_2,1}$ & $N$ & $\tau$ & $E^h_{bi}$ & $D^h_{I_2,2}$ & $N$ & $\tau$ & $E^h_{pre}$ & $D^h_{g,1}$ & $N$ \\
\hline
$2^{-3}$ & -1.01E-2 & 1.1E-3 & 85 &  $0.01$ & 17.2365 & 0.0753 & 15935 & 0.05 & 0.2124 & 0.1743 & 923  \\ 
\hline
$2^{-4}$ & -1.01E-2 & 6.1E-4 & 174 & $0.005$ & 17.3595 & 0.0374 & 32371 &  0.025 & 0.2112 & 0.0984 & 1848 \\ 
\hline
$2^{-5}$ & -1.01E-2 & 3.2E-4 & 348 & $0.0025$ & 17.4194 & 0.0186 & 66966  & 0.0125 & 0.2105 & 0.0578 & 3698 \\ 
\hline
\end{tabular}}
\caption{
Computation of  {Example 1,2,3 using Algorithm \ref{algo:acc_flow}} with $\eta^n=\frac{n-1}{n-1+\alpha}$ and varying $\tau$. We take $\alpha=3,6,3$ for Example 1,2,3 respectively.
} \label{tab:varying_tau_nesterov}
\end{table}

\begin{table}[h!]
\resizebox{\linewidth}{!}{
\begin{tabular}{|c|c|c|c|c|c|c|c|c|c|c|c|}
\hline
\multicolumn{4}{|c|}{Example 1}& \multicolumn{4}{|c|}{Example 2} &\multicolumn{4}{|c|}{Example 3} \\

\hline
$\tau$ & $E^h_{si}$ &  $D^h_{I_2,1}$ & $N$ & $\tau$ & $E^h_{bi}$ & $D^h_{I_2,2}$ & $N$ & $\tau$ & $E^h_{pre}$ & $D^h_{g,1}$ & $N$ \\
\hline
$2^{-3}$ & -1.01E-2 & 1.7E-3 & 56 &  $0.01$ & 17.0449 & 0.1211 & 9516 & 0.05 & 0.2122 & 0.2410 & 504  \\ 
\hline
$2^{-4}$ & -1.01E-2 & 9.4E-4 & 110 & $0.005$ & 17.2628 & 0.0601 & 19190 &  0.025 & 0.2110 & 0.1376 & 978 \\ 
\hline
$2^{-5}$ & -1.01E-2 & 4.9E-4 & 218 & $0.0025$ & 17.3704 & 0.0299 & 38543 & 0.0125 & 0.2104 & 0.0790 & 1925 \\ 
\hline
\end{tabular}}
\caption{
Computation of  {Example 1,2,3 using Algorithm \ref{algo:acc_flow}} with $\eta^n=1-\beta\tau$ and varying $\tau$. We take $\beta=1,0.2,0.8$ for Example 1,2,3 respectively.
} \label{tab:varying_tau_heavyball}
\end{table}

\begin{table}[h!]
\resizebox{\linewidth}{!}{
\begin{tabular}{|c|c|c|c|c|c|c|c|c|c|c|c|}
\hline
\multicolumn{4}{|c|}{Example 1}& \multicolumn{4}{|c|}{Example 2} &\multicolumn{4}{|c|}{Example 3} \\

\hline
$\tau$ & $E^h_{si}$ &  $D^h_{I_2,1}$ & $N$ & $\tau$ & $E^h_{bi}$ & $D^h_{I_2,2}$ & $N$ & $\tau$ & $E^h_{pre}$ & $D^h_{g,1}$ & $N$ \\
\hline
$2^{-3}$ & -1.01E-2 & 1.1E-3 & 33 &  $0.01$ & 17.1742 & 0.0921 & 6008 & 0.05 & 0.2124 & 0.1734 & 368  \\ 
\hline
$2^{-4}$ & -1.01E-2 & 5.7E-4 & 62 & $0.005$ & 17.3325 & 0.0454 & 12047 &  0.025 & 0.2111 & 0.0979 & 732  \\ 
\hline
$2^{-5}$ & -1.01E-2 & 2.9E-4 & 121 & $0.0025$ & 17.4097 & 0.0225 & 24142 & 0.0125 & 0.2104 & 0.0574 & 1458 \\ 
\hline
\end{tabular}}
\caption{
Computation of  {Example 1,2,3 using Algorithm \ref{algo:acc_flow_backtrack}} with $\alpha=3$, $512$ elements and varying $\tau$. 
} \label{tab:backtracking}
\end{table}

\begin{table}[h!]
\resizebox{\linewidth}{!}{
\begin{tabular}{|c|c|c|c|c|c|c|c|c|c|c|c|}
\hline
\multicolumn{4}{|c|}{Example 1}& \multicolumn{4}{|c|}{Example 2} &\multicolumn{4}{|c|}{Example 3} \\

\hline
$\tau$ & $E^h_{si}$ &  $D^h_{I_2,1}$ & $N$ & $\tau$ & $E^h_{bi}$ & $D^h_{I_2,2}$ & $N$ & $\tau$ & $E^h_{pre}$ & $D^h_{g,1}$ & $N$ \\
\hline
$2^{-3}$ & -1.01E-2 & 2.0E-5 & 34 &  $0.01$ & 17.4771 & 1.7E-5 & 7159 & 0.05 & 0.2098 & 0.0196 & 368  \\ 
\hline
$2^{-4}$ & -1.01E-2 & 1.9E-6 & 63 & $0.005$ & 17.4861 & 2.0E-6 & 12282 &  0.025 & 0.2098 & 0.0196 & 730 \\ 
\hline
$2^{-5}$ & -1.01E-2 & 1.9E-7 & 121 & $0.0025$ & 17.4859 & 2.4E-7 & 24372 & 0.0125 & 0.2098 & 0.0196 & 1458\\ 
\hline
\end{tabular}}
\caption{
Computation of  {Example 1,2,3 using Algorithm \ref{algo:acc_flow_BDF2}} with $\alpha=3$, $512$ elements and varying $\tau$. 
} \label{tab:BDF2}
\end{table}

\begin{figure}[htbp]
    \centering
    \begin{minipage}{0.45\textwidth}
        \centering
        \includegraphics[width=\textwidth]{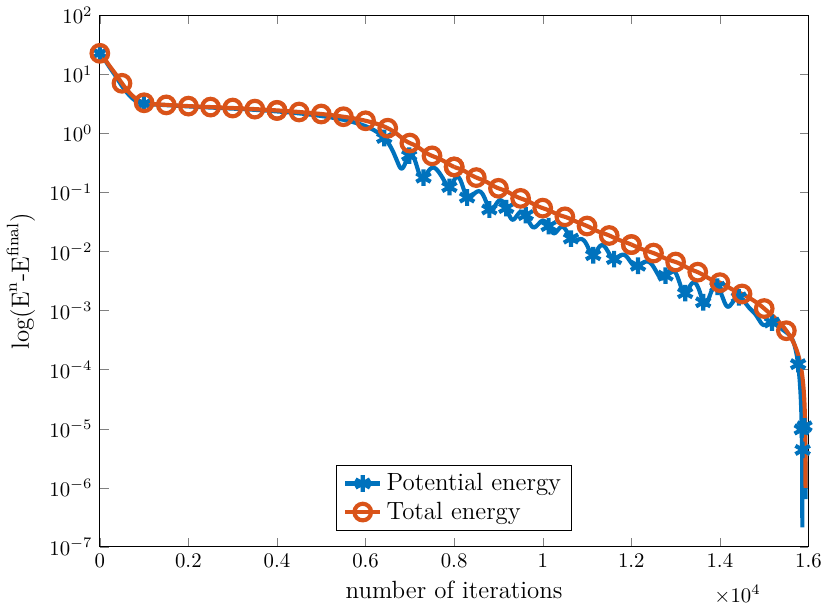} 
        \caption{Logarithmic plot of $E_{bi}$ and $E_{total}$ for Example 2 using Algorithm \ref{algo:acc_flow} with $\tau = 0.01$, $512$ elements, $\tol = 10^{-4}$, and $\alpha = 6$.}
\label{fig:Dh-plot}
    \end{minipage}\hfill
    \begin{minipage}{0.45\textwidth} 
        \centering
         \includegraphics[width=\textwidth]{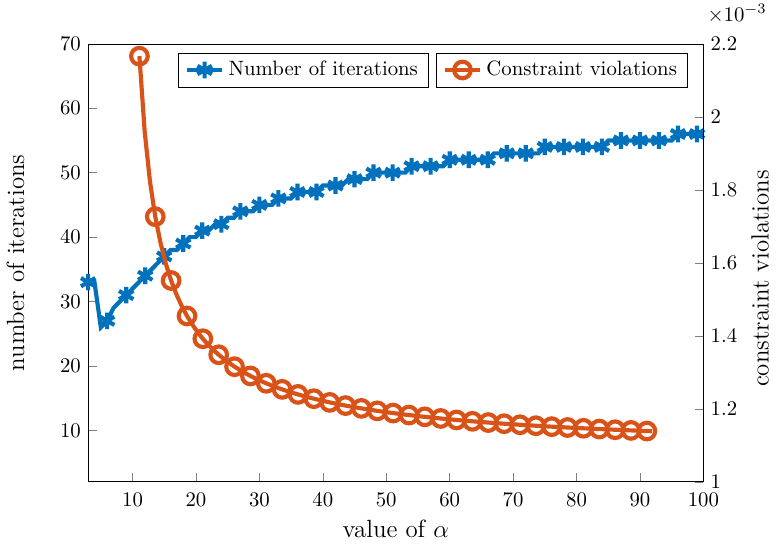} 
		\caption{Computation of Example 1 with $512$ elements and $\tau = 2^{-2}$ for various values of $\alpha$. Plots of $N$ versus $\alpha$ and $D^h_{I_2,1}$ versus $\alpha$.}
		\label{fig:alpha-N-Dh}
    \end{minipage}
\end{figure}

\subsection{Effect of $\alpha$ and $\beta$ and selection principle}\label{sec:varying-alpha}
With the choice $\eta^n = \frac{n-1}{n+\alpha-1}$ in Algorithm \ref{algo:acc_flow}, we demonstrate how $\alpha$ affects the total number of iterations and constraint violations in Example 1, as in Fig. \ref{fig:alpha-N-Dh}. Additionally, we present the effect of $\beta$ with the choice $\eta^n = 1 - \beta\tau$ in Example 2, as in Table \ref{tab:bilayer_nesterov_constant_eta}.

We note that the constraint violation decreases as $\alpha$ increases, and it stabilizes for large enough $\alpha$. Moreover, the constraint violation increases as $\beta$ decreases. These are consistent with estimates in the proof of Theorem \ref{thm:admissibility-pre}.

In numerical simulations, we select the value of $\alpha$ that results in the fastest convergence,  i.e., the fewest number of iterations $N$. It is important to note that the rationale behind designing Algorithm \ref{algo:acc_flow} is to accelerate the flow, and higher values of $\alpha$ or $\beta$ only lead to improvements of constants in constraint violation, not in its magnitudes.

We test $\alpha$ or $\beta$ for various examples, it seems that the optimal $\alpha$ or $\beta$ mainly depends on data of problems and are influenced slightly by $\tau$, but rather insensitive to mesh size $h$. 
Our parameter selection principle is as follows: for fixed $\tau$ and $\tol$, we use a relatively coarse mesh, treating the total number of iterations $N$ as a function of $\alpha$ or $\beta$. We employ the bisection method to find parameters that minimize $N(\alpha)$ or $N(\beta)$ within specified intervals, terminating the process after several steps to obtain a relatively optimal $\alpha$ or $\beta$.

\begin{table}[htbp]
    \centering
        \begin{minipage}{0.5\textwidth} 
        \centering
        \begin{tabular}{|c|c|c|c|}
\hline
$\tol$ & $E_{ki}^h[\vy^{N}_h]$ & $D^h_{I_2,1}$ & $N$ \\
\hline
1E-5 & 1.6E-6 & 1.13E-3 & 41 \\ 
\hline
1E-6 & 7.4E-7 & 1.14E-3 & 85 \\ 
\hline
1E-7 & 3.9E-8 & 1.14E-3 & 113 \\ 
\hline
1E-8 & 7.1E-9 & 1.14E-3 & 160 \\ 
\hline
\end{tabular}
        \caption{
        Kinetic energy at the final step, constraint violation, and iteration number of Algorithm \ref{algo:acc_flow} as a function of $\tol$ for Example 1, using $512$ elements, $\tau = 2^{-3}$, and $\alpha = 3$.
        } \label{tab:kinetic-energy-tol}
    \end{minipage}\hfill
    \begin{minipage}{0.45\textwidth} 
        \centering
        \begin{tabular}{|c|c|c|c|c|c|}
\hline
$\beta$ & $E_{bi}^h$ & $D^h_{I_2,2}$ & $N$  \\
\hline
$1$ &  17.2583  & 0.0653 & 34829 \\ 
\hline
$0.5$ &  17.1870 & 0.0833 & 18114 \\
\hline
$0.3$ & 17.1168 & 0.1014 & 11246 \\ 
\hline
$0.2$ & 17.0449 & 0.1211 & 9516 \\ 
\hline
$0.1$ & 16.9075 & 0.1627 & 10477 \\ 
\hline
\end{tabular}
\caption{Numerical performance of Algorithm \ref{algo:acc_flow} with $\eta^n = 1 - \beta \tau$ for varying $\beta$ in Example 2, using $512$ elements, $\tau = 0.01$, and $\tol = 10^{-4}$.
} \label{tab:bilayer_nesterov_constant_eta}
    \end{minipage}
\end{table}

\subsection{Acceleration effect: comparison with gradient flow}\label{sec:comparison}
We compare Algorithms \ref{algo:acc_flow}, \ref{algo:acc_flow_backtrack} and \ref{algo:acc_flow_BDF2} with gradient flow in the Table \ref{tab:comparison_method}, where relatively optimal $\alpha$ and $\beta$ are selected in the proposed algorithms following the principle in Section \ref{sec:varying-alpha}. Energy plots on a logarithmic scale for several cases are shown in Figure \ref{fig:comparison-vertical-load}.   

We observe that Algorithm \ref{algo:acc_flow} with varying damping coefficients $\eta^n = \frac{n-1}{n+\alpha-1}$ is slightly slower, 2-3 times faster, and slightly faster than the gradient flow for Examples 1, 2, and 3, respectively. The energy plot in Figure \ref{fig:comparison-vertical-load} shows that the gradient flow exhibits exponential decay in Example 1, which explains why Algorithm \ref{algo:acc_flow} is not faster in this case. In Example 2, the bilayer plates' energy introduces non-convexity, bringing difficulty to gradient flow computations, while accelerated flows show a significant advantage. In Example 3, the metric \eqref{eq:metric-ex3} has different eigenvalues compared to $g=I_2$ in Example 1, enabling the proposed algorithms to outperform the gradient flow. Figure \ref{fig:comparison-vertical-load} further illustrates the pronounced advantage of accelerated flows for metrics with a larger ratio of eigenvalues.

Algorithm \ref{algo:acc_flow} with constant coefficients $\eta^n = 1 - \beta\tau$ outperforms the algorithm with varying coefficients. However, the relatively optimal $\beta$ may yield a larger error in $D^h_{g,p}$, slightly reducing approximation accuracy.

In all scenarios, Algorithm \ref{algo:acc_flow_backtrack}, which incorporates backtracking, is faster than both Algorithm \ref{algo:acc_flow} and the one by gradient flow, particularly demonstrating significant speed improvements in Example 2 while maintaining comparable constraint violations.

When employing BDF2 schemes in Algorithm \ref{algo:acc_flow_BDF2}, the total number of iterations $N$ shows only a minor change compared to Algorithm \ref{algo:acc_flow_backtrack}, while significantly reducing constraint violations $D^h_{g,p}$, leading to more accurate approximations.

\begin{table}[h!]
\resizebox{\linewidth}{!}{
\begin{tabular}{|c|c|c|c|c|c|c|c|c|c|c|c|c|}
\hline
  & \multicolumn{4}{|c|}{Example 1} & \multicolumn{4}{|c|}{Example 2} & \multicolumn{4}{|c|}{Example 3} \\
\hline
Methods & param. & $E^h_{si}$ &  $D^h_{I_2,1}$ & $N$ & param. & $E^h_{bi}$ & $D^h_{I_2,2}$ & $N$ & param. & $E^h_{pre}$ & $D^h_{g,1}$ & $N$ \\
\hline
GF & N/A & -1.02E-2 & 2.1E-3 & 40 & N/A & 16.9116 & 0.1823 & 35458 & N/A & 0.2121 & 0.1860 & 727\\ 
\hline
Algo. \ref{algo:acc_flow} & $\alpha=6$ & -1.01E-2 & 8.0E-4 & 54 & $\alpha=9$ & 17.2749 & 0.0631 & 14832 & $\alpha=6$ & 0.2116 & 0.1188 & 542 \\ 
\hline
Algo. \ref{algo:acc_flow} & $\beta=1.375$ & -1.02E-2 & 1.4E-3 & 39 & $\beta=0.25$ & 17.0862 & 0.1096 & 10215 & $\beta=0.65$ & 0.2126 & 0.2938 & 364 \\ 
\hline
Algo. \ref{algo:acc_flow_backtrack} & $\alpha=3$ & -1.01E-2 & 1.1E-3 & 33 & $\alpha=3$ & 17.1742 & 0.0921 & 6008 & $\alpha=6.75$ & 0.2115 & 0.1121 & 334 \\ 
\hline
Algo. \ref{algo:acc_flow_BDF2} & $\alpha=3$ & -1.01E-2 & 2.0E-5 & 34 & $\alpha=3$ & 17.4771 & 1.7E-5 & 7159 & $\alpha=6.75$ & 0.2098 & 0.0196 & 332  \\ 
\hline
\end{tabular}}
\caption{  
Comparison of Algorithms \ref{algo:acc_flow}, \ref{algo:acc_flow_backtrack}, \ref{algo:acc_flow_BDF2}, and gradient flow (GF) with $\Omega$ partitioned into $512$ elements. For Examples 1, 2, and 3, we use $\tol = 10^{-6}, 10^{-4}, 10^{-6}$ and $\tau = 2^{-3}, 0.01, 0.05$, respectively. In Example 3, we set $c = 0.01$ in \eqref{eq:metric-ex3}.
}
\label{tab:comparison_method}
\end{table}

\begin{figure}[htbp]
	\begin{center}
	\includegraphics[width=.45\textwidth]{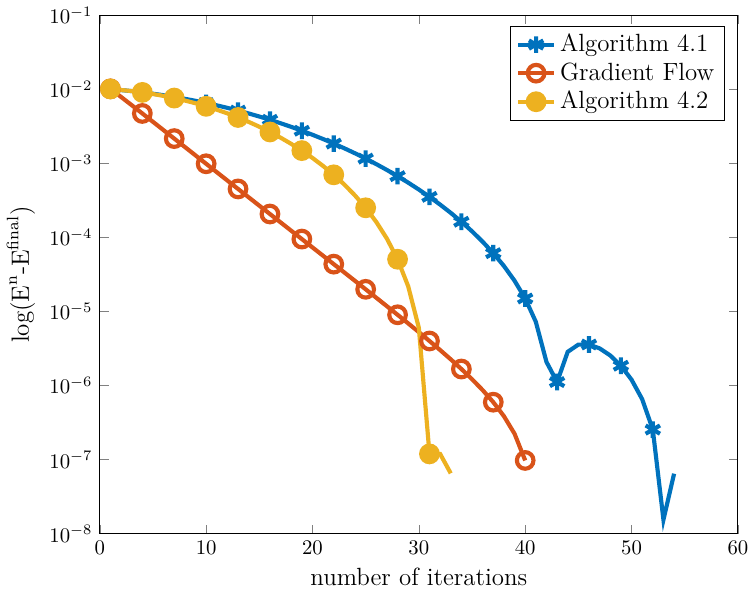}
    \includegraphics[width=.45\textwidth]{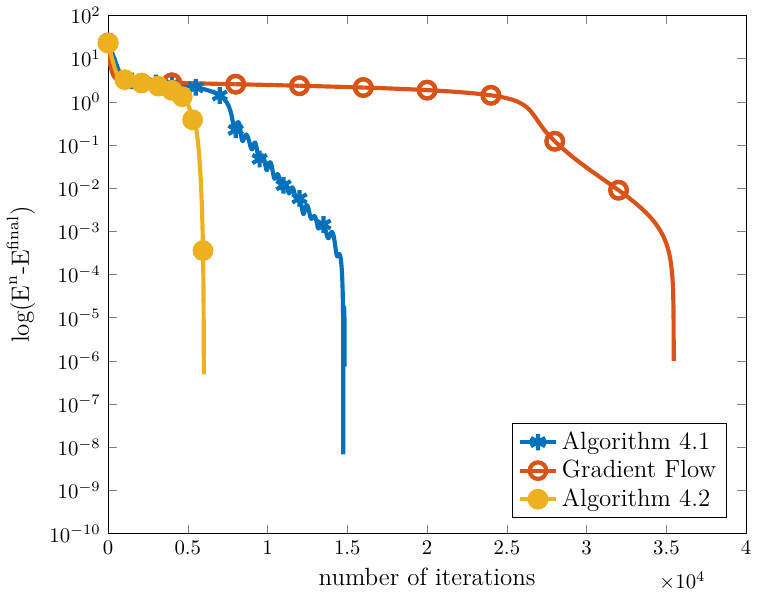}
    \includegraphics[width=.45\textwidth]{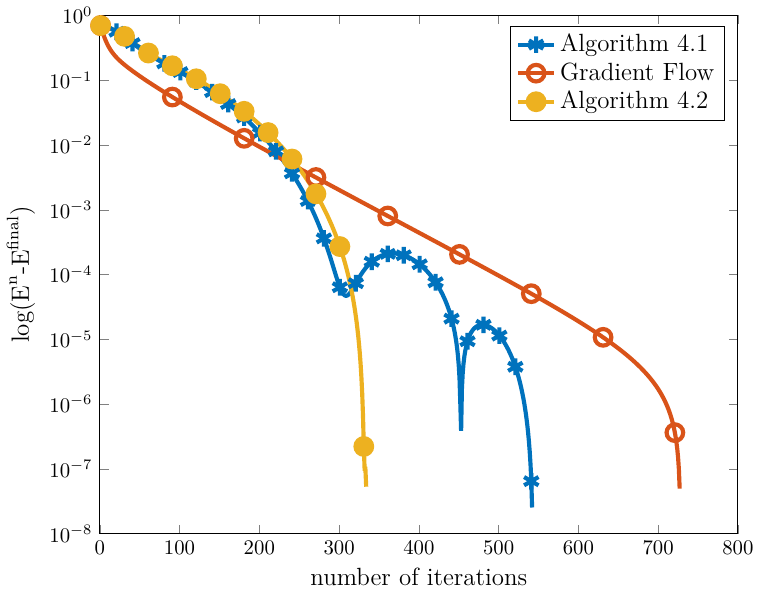}
    \includegraphics[width=.45\textwidth]{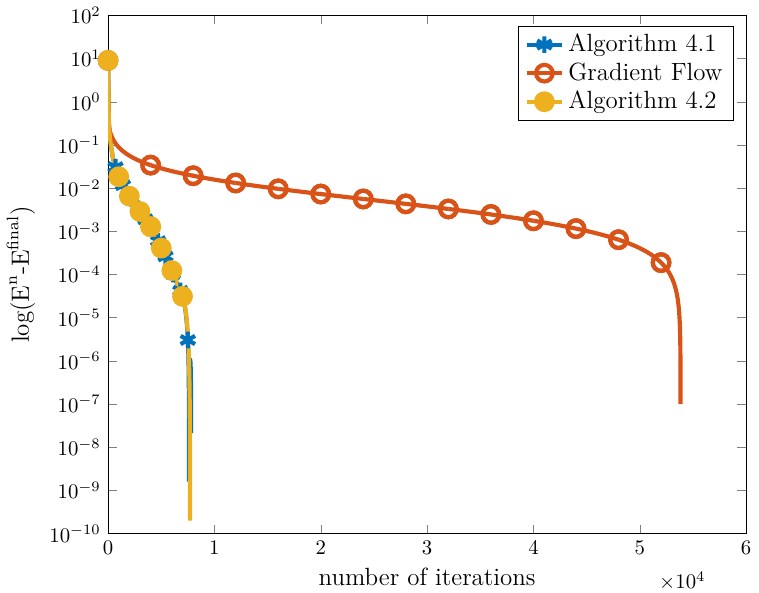}
		\caption{ 
        The potential energy plots in logarithmic scale for Algorithm \ref{algo:acc_flow} with varying damping coefficients, Algorithm \ref{algo:acc_flow_backtrack}, and gradient flows are displayed as follows: Example 1 in the upper left, Example 2 in the upper right, and Example 3 in the lower left, using the same data as in Table \ref{tab:comparison_method}. For Example 3, we further set $c=0.1$ in \eqref{eq:metric-ex3} to achieve a larger eigenvalue ratio, with the corresponding plot shown in the lower right.  {We use $512$ elements, $\tau=0.1$, and $\alpha=3$ for all the cases}.
  }
		\label{fig:comparison-vertical-load}
	\end{center}
\end{figure}

\subsection{Stationary points and global minimizers}\label{sec:local-global}
Computing global minimizers for these non-convex problems can be a challenging endeavor. When $Z=5I_2$ in the bilayer plates model, i.e. in Example 2, with $\gamma=5$, the exact solution is a cylinder with energy $E_{bi}=500$, but it winds more times compared to the case when $Z=I_2$. However, previous gradient flow-based methods, such as \cite{bartels2017bilayer,bonito2020discontinuous}, have been reported to fail in producing a cylindrical shape but stuck at a stationary point with a ``dog's ear'' shape. In contrast, the accelerated flows enable us to reach a proper approximation of the cylindrical shape. Snapshots of the evolution computed by Algorithm \ref{algo:acc_flow_BDF2} are displayed in Fig. \ref{fig:bilayer_Z5I}. This experiment was conducted with a refinement of $8192$ elements, $\tau=0.01$ and $\alpha=3$.  {We note that self-intersections occur during the evolution and in the final configuration. To prevent this, additional penalty terms must be incorporated into the energy functional; see \cite{bartels2022simulating} for detailed discussions.}
We plot the evolution of the potential energy $E_{bi}$ for this example in Figure \ref{fig:energy-local-global}. The energy curve exhibits a relatively flat region in the interval $N \in [4k, 12k]$, corresponding to a stationary point with a ``dog's ear'' shape, before decreasing to the cylindrical global minimizer. The energy $E_{bi}$ is monotonically decreasing due to the backtracking strategy in Algorithm \ref{algo:acc_flow_BDF2}. The final computed energy is $E^h_{bi} \approx 494.7594$, close to the exact energy of $500$, with a final constraint violation of $D^h_{I_2,2} = 1.9425 \times 10^{-4}$.

\begin{figure}[htbp]
	\begin{center}
	\includegraphics[width=.95\textwidth]{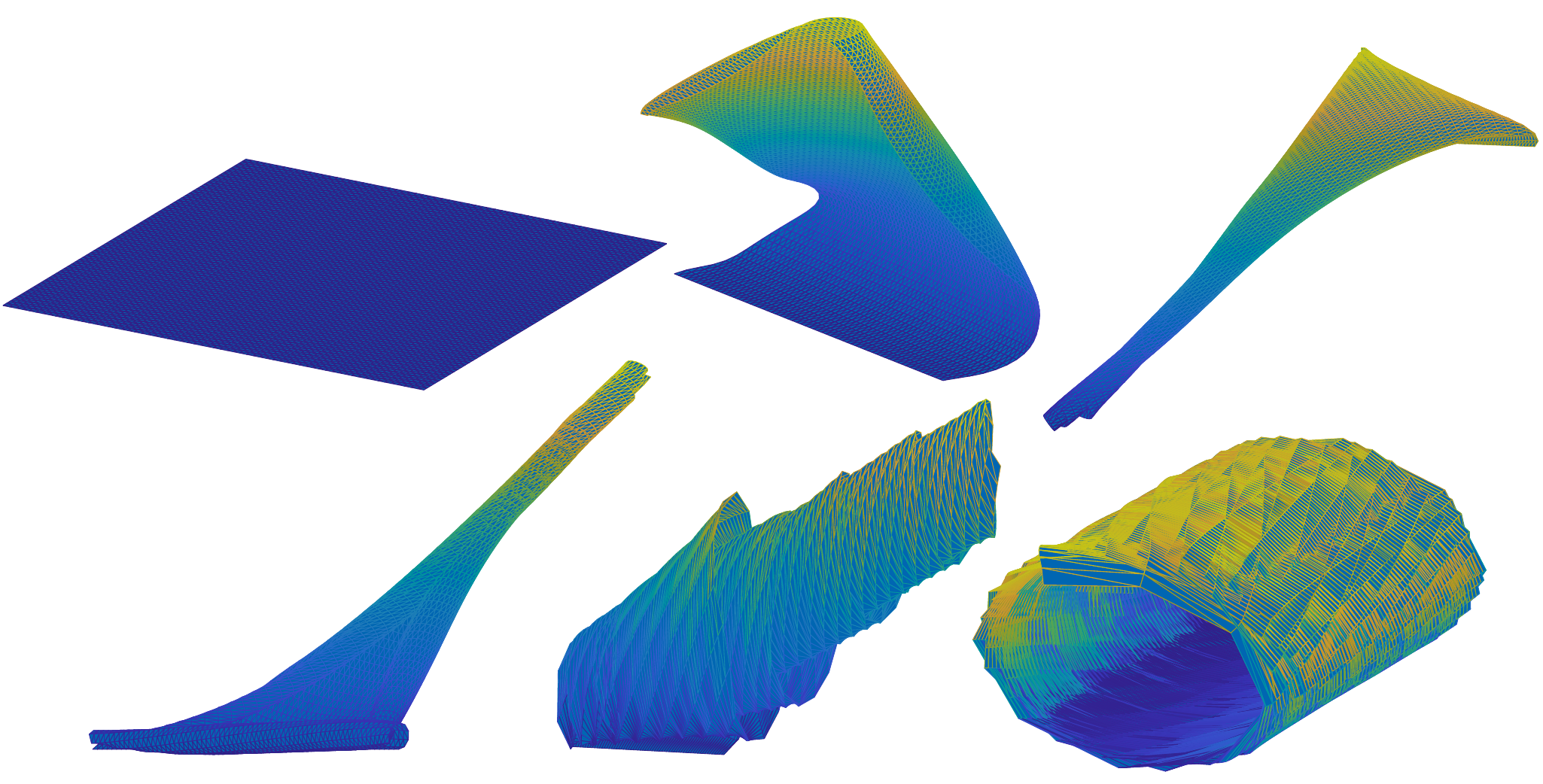}
		\caption{Snapshots of evolution for bilayer plates with $Z=5I_2$. From left to right, first row to second row: $\vy^n$ for $n=0,400,2500,6000,20000,30400$. The ``dog's ear'' shape appears in the process as a stationary point, but the accelerated flow is able to drive the deformation to evolve into the cylindrical global minimizer in the end.}
		\label{fig:bilayer_Z5I}
	\end{center}
\end{figure}

\begin{figure}[htbp]
	\begin{center}
	\includegraphics[width=.95\textwidth]{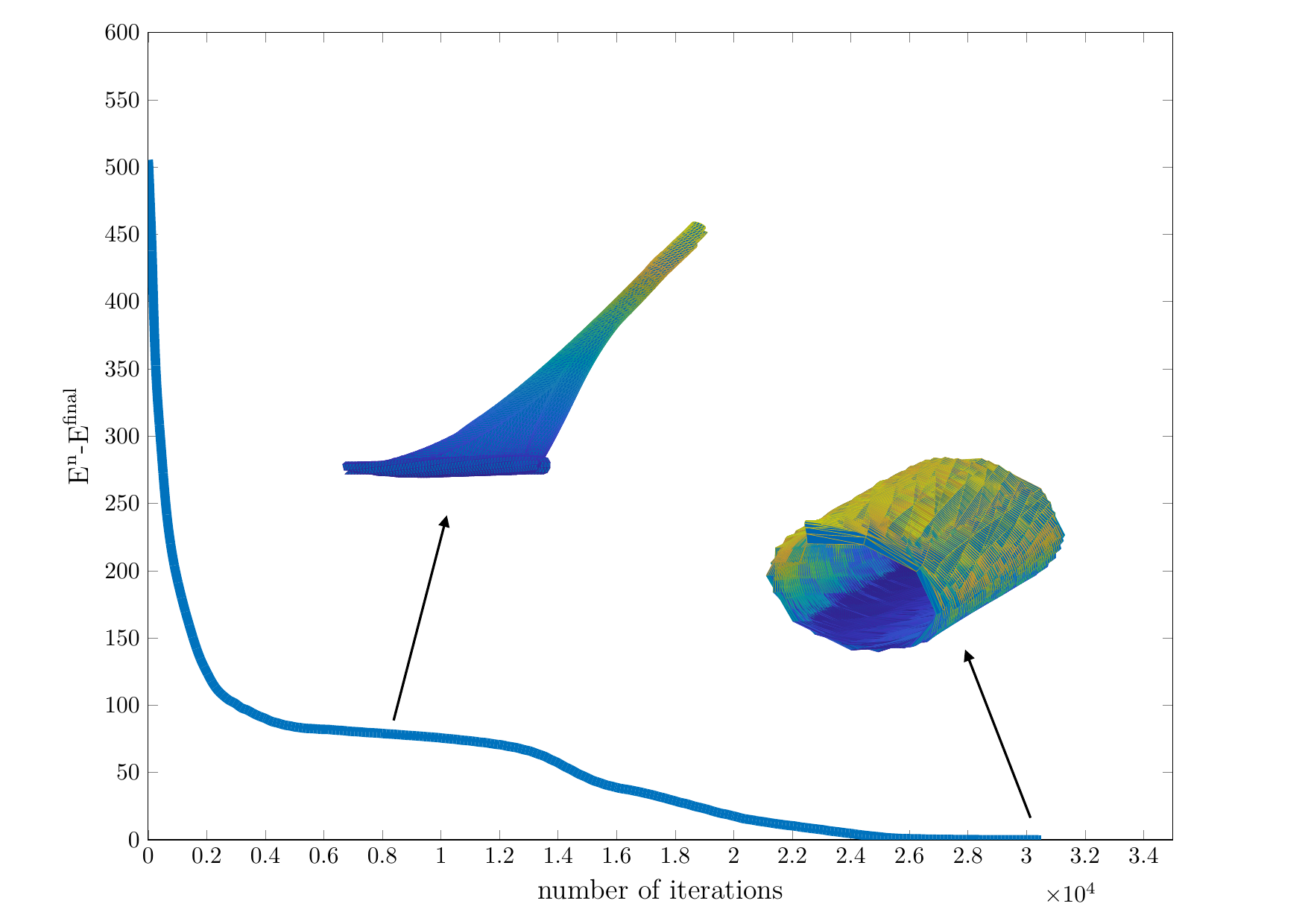}
		\caption{Plot of potential energy $E_{bi}$ for Example 2 with $\gamma = 5$, computed using Algorithm \ref{algo:acc_flow_BDF2}.}
		\label{fig:energy-local-global}
	\end{center}
\end{figure}

\section{Conclusions and future work}\label{sec:conclusions}
\subsection{Summary of contributions}
In this work, we have integrated multiple strategies ranged from optimization to numerical PDEs for approximating solutions of non-convex constrained variational problems. 
Our novel approaches resort to second-order dynamics motivated by acceleration techniques from convex optimization, together with a tangent space update strategy for effectively handling non-convex constraints, the backtracking technique, and the BDF2 approximation. 
By combining these techniques, we have developed a set of accelerated gradient flows that exhibit remarkable enhancements compared to existing methods based on gradient flows associated with nonlinear plates. 
 {Numerical experiments indicate that Algorithm~\ref{algo:acc_flow_BDF2} achieves better accuracy and efficiency than other proposed methods tested in the challenging bilayer case (Example~2). Specifically, the BDF2 scheme yields smaller constraint violations than BDF1, and the backtracking and restarting strategy reduces the number of iterations to reach the stopping criteria.}
  
\subsection{Extension to more general problems}
A natural question arises: can we extend the methods proposed in this work and the corresponding analysis to other non-convex variational problems with similar structures? We systematically investigated such methods in our accompanying paper \cite{dong2024bdf} for a general class of constrained variational problems with convex energies where the admissible sets consist of level sets of smooth operators; prestrained plate models \eqref{prob:min_Eg} serve as an application there. In particular, we establish a framework to estimate constraint violation errors for general high-order BDF schemes in \cite{dong2024bdf}, including their energy stability. However, extending these methods to non-convex energies is more challenging, as different types of non-convex/nonlinear terms in the energy may influence both the analysis and computational performance.

\subsection{Future directions}
While we have established energy stability and estimates on constraint violations in this article, several theoretical questions remain open. For instance the asymptotical convergence to stationary points of the constrained optimization problems as the time-step vanishes, the convergence rates of the proposed algorithms, and a full error analysis that incorporates both optimization and spatial discretization, will be of high interest to study.

A more delicate approach to enforce the metric constraint $I[\vy] = g$ might be desirable for exact preservation in iterative schemes while ensuring the stability, efficiency, and accuracy of the algorithms. That will bring us to a more challenging topic of studying accelerated gradient flows on smooth manifolds, which is on the agenda of our next steps.

\section*{Acknowledgments}
The authors thanks the anonymous reviewers whose comments helped to improve the presentation of the paper.
The work of G. Dong was supported by the NSFC grant  No. 12471402, and the NSF grant of Hunan Province No. 2024JJ5413. The work of H. G. was supported in part by the Andrew Sisson Fund and the Faculty Science Researcher Development Grant of the University of Melbourne. The work of S. Yang was supported by the NSFC grant No.12401512.

\appendix
\section{Proof of Theorem \ref{thm:energy-stab-second-bi}}\label{sec:appendix}
In this section, we present details of proof for the Theorem \ref{thm:energy-stab-second-bi} based on an induction argument. 
\begin{proof}
We proceed by an induction argument.
We note that it is straightforward to verify \eqref{eq:bilayer-second-energy-onestep} and \eqref{eq:cons-control-bi} for $M=1$ by a direct calculation. 
We assume \eqref{eq:bilayer-second-energy-onestep} and \eqref{eq:cons-control-bi} hold for $M=1,\cdots,n-1$ for $2\le n\le N-1$. Then our goal is to prove the same estimates for $M=n$. We first focus on the case when $\eta^n=(n-1)/(n+\alpha-1)$.  

{\bf Step (i)}, \emph{intermediate estimate}.
Taking $\vv=\delta\vy^{n+1}$ in \eqref{eq:bilayer_nesterov_scheme_3} and similar to \eqref{eq:energy-stab-pre-inter} we derive 
\begin{align}\label{eq:bilayer-energy-identity}
&\frac{1}{2\tau^{2}}\Big(|\delta\vy^{n+1}|_{H^2}^2-|\delta\vy^{n}|_{H^2}^2\Big)+\frac12a(\vy^{n+1},\vy^{n+1})-\frac12a(\vy^{n},\vy^{n}) \\  \nonumber
&+\frac{1-\eta^{n}}{2\tau^{2}}\Big(|\delta\vy^n|_{H^2}^2+|\delta\vy^{n+1}|_{H^2}^2\Big)+\frac{1+\eta^n}{2}a(\delta\vy^{n+1},\delta\vy^{n+1})+\frac{\eta^n}{2}a(\delta\vy^n,\delta\vy^{n})\\  \nonumber
&+\frac{\eta^{n}}{2\tau^{2}}|\delta\vy^{n+1}-\delta\vy^{n}|_{H^2}^2-\frac{\eta^n}{2}a(\delta\vy^{n+1}-\delta\vy^n,\delta\vy^{n+1}-\delta\vy^n)\\  \nonumber
&=\ell[\vy^n](\delta\vy^{n+1})+R^n(\delta\vy^{n+1}).
\end{align}  
Recalling $a(\vy,\vy)=|\vy|_{H^2}^2$, $0<\eta^n<1$, and taking $\tau<1$, it is clear that the second and third lines both are non-negative. Therefore, \eqref{eq:bilayer-energy-identity} becomes
{\small
\begin{equation}\label{eq:bi-rmv-23}
\frac{1}{2\tau^{2}}|\delta\vy^{n+1}|_{H^2}^2+\frac12|\vy^{n+1}|_{H^2}^2\le\frac{1}{2\tau^{2}}|\delta\vy^{n}|_{H^2}^2+\frac12|\vy^{n}|_{H^2}^2+\ell[\vy^n](\delta\vy^{n+1})+R^n(\delta\vy^{n+1}). 
\end{equation}
}
To control the right hand side, we use induction assumption and \eqref{eq:energy-stability-step1-bi} to yield
\begin{equation*}
 \frac{1}{2\tau^{2}}|\delta\vy^{n}|_{H^2}^2+E_{bi}[\vy^{n}]\le \frac{1}{2\tau^{2}}|\delta\vy^{1}|_{H^2}^2+E_{bi}[\vy^{1}]\le E_{bi}[\vy^{0}].
\end{equation*}
We note that $\vy^n\in\mathcal{A}^{\epsilon}_{I_2,2}$ with $\epsilon=\alpha_1\alpha_2 E_{bi}[\vy^0]$ as \eqref{eq:cons-control-bi} holds for $M=n$ and $\tau<\alpha_1$. Then resorting to Theorem \ref{lem:coercivity-bi} (coercivity) and requiring $\alpha_1\leq 1$, we derive 
\begin{equation}\label{eq:coer-est-2}
\frac{1}{2\tau^2}|\delta\vy^n|_{H^2}^2+\frac12|\vy^{n}|_{H^2}^2\le C(\vy^0,Z,\Omega)\alpha_2^2+C(\vy^0,\vvarphi,\Phi,Z,\Omega).
\end{equation}
Therefore, substituting \eqref{eq:coer-est-2} into the RHS of \eqref{eq:bi-rmv-23}, combining with estimates \eqref{eq:est-Rnh} and \eqref{eq:est-ln} for $R^n(\delta\vy^{n+1})$ and $\ell[\vy^n](\delta\vy^{n+1})$, and then using Cauchy inequality to produce $\frac12|\delta\vy^{n+1}|_{H^2}^2$ on the RHS of \eqref{eq:bi-rmv-23}, we obtain an intermediate estimate:
\begin{align}\label{eq:bilayer-interm-est}
&\qquad\qquad(\frac{1}{2\tau^{2}}-\frac12)|\delta\vy^{n+1}|_{H^2}^2+\frac12|\vy^{n+1}|_{H^2}^2 \\ \nonumber
&\le C(\vy^0,Z,\Omega)\alpha_2^4+C(\vy^0,\vvarphi,\Phi,Z,\Omega)\alpha_2^2+C(\vy^0,\vvarphi,\Phi,Z,\Omega),
\end{align}
and we emphasize that the constants $C(\vy^0,Z,\Omega),C(\vy^0,\vvarphi,\Phi,Z,\Omega)$ in \eqref{eq:coer-est-2} and \eqref{eq:bilayer-interm-est} are generic and not necessarily the same, but they depend on the same data specified within the parentheses.   

{\bf Step (ii)}, \emph{energy stability estimate}.
Using the algebraic relation 
\begin{equation*}
\begin{split}
(a^{M+1} -a^M)b^{M}&c^{M} +a^M(b^{M+1}-b^M)c^{M}+a^Mb^M(c^{M+1}-c^M)\\
&=a^{M+1}b^{M+1}c^{M+1}-a^{M}b^{M}c^{M}-(a^{M+1}-a^M)(b^{M+1}-b^M)c^{M+1}\\
&-(a^{M+1}-a^M)b^{M}(c^{M+1}-c^{M})-a^M(b^{M+1}-b^M)(c^{M+1}-c^M),
\end{split}
\end{equation*}
we rewrite $\ell[\vy^n](\delta \vy^{n+1})$ as follows:
{\small
\begin{equation}\label{eq:cubic-rewrite}
\begin{split}
 \ell[\vy^n](\delta \vy^{n+1})&=\sum_{i,j=1}^2\int_{\Omega}\big(\partial_{ij}\vy^{n+1}\cdot(\partial_1\vy^{n+1}\times\partial_2\vy^{n+1})-\partial_{ij}\vy^{n}\cdot(\partial_1\vy^{n}\times\partial_2\vy^{n})\big)Z_{ij} \\
& - \sum_{i,j=1}^2\int_{\Omega} \partial_{ij}\delta\vy^{n+1}\cdot(\partial_1\delta\vy^{n+1}\times\partial_2\vy^{n+1}+\partial_1\vy^{n}\times\partial_2\delta\vy^{n+1})Z_{ij} \\
& - \sum_{i,j=1}^2\int_{\Omega} \partial_{ij}\vy^{n}\cdot(\partial_1\delta\vy^{n+1}\times\partial_2\delta\vy^{n+1})Z_{ij}.
\end{split}
\end{equation} 
}
Substituting this into \eqref{eq:bilayer-energy-identity}, we note that the terms in first line are exactly the cubic energies $E^{nc}_{bi}[\vy^{n+1}]$, $E^{nc}_{bi}[\vy^{n}]$ and contribute to the
full energies $E_{bi}[\vy^{n+1}]$, $E_{bi}[\vy^n]$. In contrast, the remaining terms must be estimated and absorbed into terms involving $|\delta\vy^n|_{H^2}^2+|\delta\vy^{n+1}|_{H^2}^2$ in the second line of \eqref{eq:bilayer-energy-identity}. To this end,
we estimate and obtain 
\begin{equation*}
\label{eq:est-bi-almost-last}
\begin{split}
&\frac{1}{2\tau^{2}}\big(|\delta\vy^{n+1}|_{H^2}^2-|\delta\vy^{n}|_{H^2}^2\big)+E_{bi}[\vy^{n+1}]-E_{bi}[\vy^{n}]+\frac{1-\eta^{n}}{2\tau^{2}}\Big(|\delta\vy^{n+1}|_{H^2}^2+|\delta\vy^{n}|_{H^2}^2\Big)\\
&\quad \le C(\Omega,Z)|\delta\vy^{n+1}|_{H^2}^2\left(|\vy^{n+1}|_{H^2}+|\vy^{n}|_{H^2}+C(\vvarphi,\Phi)\right) \\
&\quad +C(\Omega,Z)\left(|\delta\vy^{n+1}|_{H^2}|\delta\vy^n|_{H^2}\left(|\vy^{n}|_{H^2}+C(\vvarphi,\Phi)\right)+ |\delta\vy^{n+1}|_{H^2}|\delta\vy^n|_{H^2}^2\right) \\ 
&\quad\le f_1(\alpha_2)|\delta\vy^{n+1}|_{H^2}^2+f_2(\alpha_2)|\delta\vy^{n}|_{H^2}^2.
\end{split}
\end{equation*}
For the first inequality we have used H\"older inequality, embedding of $H^1$ into $L^4$, and Poincar\'e inequality to estimate the second and third line of \eqref{eq:cubic-rewrite}, and used \eqref{eq:est-Rnh} to estimate $R^n(\delta\vy^{n+1})$.
For the second inequality, we use
intermediate estimate \eqref{eq:coer-est-2} and \eqref{eq:bilayer-interm-est}. The constants $f_1(\alpha_2),f_2(\alpha_2)$ are polynomials of $\alpha_2>0$ with coefficients $C(\vy^0,\vvarphi,\Phi,Z,\Omega)>0$ depending only on the problem data and independent of $n$, and thus they are monotonically increasing as $\alpha_2$ increases.

Our next goal is to find $\alpha_1$ such that when $\tau<\alpha_1$ the following estimate is valid:
\begin{equation}
\label{eq:falpha-est}
f_1(\alpha_2)|\delta\vy^{n+1}|_{H^2}^2+f_2(\alpha_2)|\delta\vy^{n}|_{H^2}^2\le\frac{1-\eta^{n}}{4\tau^{2}}\Big(|\delta\vy^{n+1}|_{H^2}^2+|\delta\vy^{n}|_{H^2}^2\Big).
\end{equation}
We note that $1-\eta^n\ge1-\frac{N-1}{N+\alpha-1}$. Exploiting the assumption $N=\alpha_0\tau^{-1}$, we derive $\frac{1-\eta^n}{4\tau^2}\ge\frac{\alpha}{4(\alpha_0\tau+(\alpha-1)\tau^2)}$.
As $\alpha_0>0$, $\alpha>1$, $f_1(\alpha_2),\; f_2(\alpha_2)>0$, we can conclude \eqref{eq:falpha-est} when $\max\{f_1(\alpha_2),f_2(\alpha_2)\}\leq \frac{\alpha}{4(\alpha_0\tau+(\alpha-1)\tau^2)}$ from which we infer $\tau<\frac{-\alpha_0+\sqrt{\alpha_0^2+\alpha(\alpha-1)/\max\{f_1(\alpha_2),f_2(\alpha_2)\}}}{2(\alpha-1)}$.
Recall that we have required $\alpha_1\leq 1$. This leads to the choice 
\[\alpha_1:=\min\left\{1, \frac{-\alpha_0+\sqrt{\alpha_0^2+\alpha(\alpha-1)/\max\{f_1(\alpha_2),f_2(\alpha_2)\}}}{2(\alpha-1)}\right\} ,\]
and $\alpha_1$ depends on $\vy^0,\vvarphi,\Phi,Z,\Omega,\alpha,\alpha_0$. 
Subsequently, \eqref{eq:falpha-est} further leads to the desired estimate \eqref{eq:bilayer-second-energy-onestep} for $M=n$.

{\bf Step (iii)}, \emph{control of constraint violation}. 
It remains to verify \eqref{eq:cons-control-bi} for $M=n$. 
Exploiting $\delta\vy^{M+1}\in\mathcal{F}(\vy^M)$, we derive 
\begin{equation*}
D_{I_2,2}[\vy^{M+1}]\le D_{I_2,2}[\vy^{M}]+\|\I[\delta\vy^{M+1}]\|_{L^2(\Omega)}. 
\end{equation*}
Summing over $M=0,\ldots,n$, using telescopic cancellation, considering H\"older inequality and embedding of $H^1$ into $L^4$, and recalling $\vy^0\in\A$, we obtain 
\begin{equation*}
D_{I_2,2}[\vy^{n+1}]\le C(\Omega)\sum_{k=0}^n|\delta\vy^{k+1}|_{H^2}^2.
\end{equation*}
Summing \eqref{eq:bilayer-second-energy-onestep} over $M=1,\ldots,n$, using \eqref{eq:energy-stability-step1-bi}, $\eta^M=\frac{M-1}{M+\alpha-1}$ and $n\le N$, we obtain 
\begin{equation*}
D_{I_2,2}[\vy^{n+1}]
\le C(\Omega,\alpha)N\tau^2E_{bi}[\vy^{0}].
\end{equation*} 
Recalling $N=\alpha_0\tau^{-1}$, this finishes the proof of \eqref{eq:cons-control-bi} for $M=n$ when $\alpha_2>C(\Omega,\alpha)\alpha_0$. Upon fixing $\alpha_2$, it provides a uniform choice (independent of $M$) of constants in the induction argument. This concludes the proof.  

{\bf Step (iv)}. When $\eta^n=1-\beta\tau$, we can readily obtain the stability \eqref{eq:bilayer-second-energy-onestep} and control of constraint violation \eqref{eq:cons-control-bi} by repeating step (i)-(iii) with simpler details and without the need for the assumption $N=\alpha_0\tau^{-1}$. We omit the details for brevity.   
\end{proof}

\bibliographystyle{siamplain}
\bibliography{morley_ref}
\end{document}